\newtheorem{theorem}{Theorem}
\newtheorem{proposition}[theorem]{Proposition}
\theoremstyle{definition}
\newtheorem{definition}[theorem]{Definition}
\theoremstyle{remark}
\newtheorem{remark}[theorem]{Remark}
\def\R{\mathbb{R}}
\def\N{\mathbb{N}}
\def\haus{\mathcal{H}^{n-1}}
\def\l{\lambda}
\newcommand{\res}{\mathop{\hbox{\vrule height 7pt width .5pt depth 0pt
\vrule height .5pt width 6pt depth 0pt}}\nolimits} 
\def\curv{G}
\newcommand{\surf}[1][K]{S_{#1}}
\newcommand{\reg}[1][K]{R(#1)}
\def\tor{\mathcal{T}}
\def\Cap{{\rm cap}}
\def\costa{a}
\def\costb{b}
\def\g{\gamma}
\def\tc{\widetilde C} 
\def\tu{\widetilde u}
\def\th{\widetilde h}
\def\tk{\widetilde G}
\def\tn{\widetilde \nu}
\def\ttor{\widetilde{ F}}
\begin{document}

\title[]%
{Variational worn stones}

\author[G.~Crasta, I.~Fragal\`a]{Graziano Crasta,  Ilaria Fragal\`a}

\address[Graziano Crasta]{Dipartimento di Matematica ``G.\ Castelnuovo'',
Sapienza University of Rome\\
P.le A.\ Moro 5 -- 00185 Roma (Italy)}
\email{graziano.crasta@uniroma1.it}

\address[Ilaria Fragal\`a]{
Dipartimento di Matematica, Politecnico\\
Piazza Leonardo da Vinci, 32 --20133 Milano (Italy)
}
\email{ilaria.fragala@polimi.it}

\keywords{Convex bodies, erosion model, overdetermined problems, cone variational measures, parabolic flows, logarithmic Brunn-Minkowski inequalities.}

\subjclass[2010]{52A20, 52A40, 35N25, 53C44}

\date{\today}

\begin{abstract}  We introduce an evolution model \`a la Firey for a convex stone which tumbles on a beach and undertakes an erosion process depending on some variational energy, such as torsional rigidity, principal Dirichlet Laplacian eigenvalue, or Newtonian ca\-pa\-ci\-ty. 
Relying on the assumption of existence of a solution to the corresponding parabolic flow, we prove that the stone tends to  become asymptotically spherical.  
Indeed, we identify an ultimate shape of these flows with a smooth convex body whose  ground state 
satisfies an additional  boundary condition,
and we prove  symmetry results for the  corresponding overdetermined elliptic problems. Moreover, we extend the analysis to arbitrary convex bodies: we introduce  
new notions of cone variational measures and 
we prove that, if such a measure is absolutely continuous with constant density,  the underlying body is a ball. 
\end{abstract} 
\maketitle

\section{Introduction}\label{sec:intro} 
Aim of this paper is to propose a variational counterpart of Firey's  seminal  result stating that 
the fate of worn stones is that of becoming spherical   \cite{firey}.
Firey considered the evolution problem satisfied by the support functions  $h ( t ,\cdot)$ of a family of convex bodies $C (t)$ in $\R ^3$, 
obtained when an initial convex stone $C(0)$ tumbles on an abrasive plane  and  undertakes an erosion process, in which the rate of wear is proportional to the density of contact points with the  plane per unit surface area, and also to the volume of the stone.  The mathematical formulation reads
\begin{equation}
\label{f:fireyevolution}
\begin{cases}
 \displaystyle  \frac{ \partial h}{\partial t} ( t, \xi) =   - \costa|C(t)|   { \curv (t, \nu _t^ { -1} ( \xi ) )}  & \text{ on } (0, T) \times \mathbb S ^ { n-1} ,\\
h ( 0, \xi) = h _ 0 ( \xi)  & \text{on} \ \mathbb S ^ {n-1},\\
h ( t, \xi) \leq \costb& \text{on} \  (0, T) \times \mathbb S ^ {n-1} \,,
\end{cases}
\end{equation}
where $\costa$ and $\costb$ are positive constants,  $\nu _t^ { -1}$ is the inverse Gauss map of $C (t)$,  $\curv (t, \cdot)$ denotes the Gaussian curvature of $C (t)$, and $| C(t)|$ its  volume.  
Under the assumption that $C (0)$ is smooth and centrally symmetric, and that, for some $T>0$, the evolution problem~\eqref{f:fireyevolution}
admits a smooth solution, Firey showed that actually $T = + \infty$, that convexity and symmetry are preserved along the flow, and that an ultimate shape  must satisfy the  following geometric condition:  in every direction 
its support function $h$ and Gaussian curvature $G$ are  proportional to each other. Precisely,  denoting by $\nu$ its Gauss map 
and by $c$ a positive constant,  it  satisfies 
\begin{equation}\label{f:overfirey}
h ( \xi ) = c \, \curv (\nu ^ { -1} (\xi)) \qquad \forall \xi \in \mathbb S ^ 2\,.
\end{equation}
 Then Firey obtained the following symmetry result:  
 if a smooth and centrally symmetric convex body in $\R ^3$ satisfies the equality \eqref{f:overfirey}, it must be  a ball. 
 Moreover he conjectured that, in dimension $3$, the above results (often referred to as ``convergence of solutions to a round point'') 
should remain valid without the assumption of central symmetry; 
in higher dimensions, this is also called the generalized Firey conjecture.

 Firey's vision of worn stones has inspired many deep developments up today, in both the fields of evolution equations, and of convex geometry. 

On the side of evolution equations, the parabolic problem introduced by Firey, written in terms of a parametrization of the boundary and up to a renormalization, is nothing else than the
Gaussian curvature flow. Such flow has been intensively studied in recent years, so that it is impossible to report the related literature. 
We refer to the recent monograph \cite{ACGL} for an account about the Gaussian curvature flow and more references on it. 
Let us just mention that nowadays  a full resolution is available for both Firey's conjecture and its generalized version, 
the main steps of its history being the following: the existence of a solution to the Gauss curvature flow was proved in any space dimension 
by Chou \cite{Tso};   the convergence to a round point
was obtained  in dimension  $n = 3$  by Andrews   \cite{A99},
while in higher dimensions  it follows from a convergence result  by 
Guan-Ni \cite{GuanNi}, combined with a rigidity result  by Brendle-Choi-Daskalopoulos \cite{BCD}. 

On the side of convex geometry, the  rigidity result by Firey  is related to more general rigidity questions for convex bodies. 
In fact,  
condition   \eqref{f:overfirey} amounts to ask that a convex body $K$ has the same {\it cone volume measure} $V_K$ as a ball (see Section \ref{sec:prel} for the definition).
Accordingly, a generalized version of Firey rigidity problem reads as follows:
  for  $K$ belonging to the class $\mathcal K ^n$ of convex bodies in $\R ^n$, and a ball $B$,  investigate the validity of the implication  \begin{equation}\label{f:critical00} V _{ K } = \, V _ B   \quad \Rightarrow\quad K  = B \,.
 \end{equation}  
In the above mentioned papers about the asymptotic behaviour of the Gaussian curvature flow, it has been proved that \eqref{f:critical00}  holds under the assumption that $K$ has its centroid at the origin \cite[Proposition 3.3]{GuanNi} 
or that $K$ is smooth and strictly convex \cite[Theorem 6]{BCD}.

Looking  at \eqref{f:critical00},
one is led in a natural way to the more general  question whether the cone volume measure  determines uniquely the associated convex body. 
In particular, denoting by $ \mathcal K ^n _*$ the class of centrally symmetric convex bodies in $\R^n$,  the question is whether, 
given $K$ and $K_0$ in $\mathcal K ^n_*$,  it holds that
 \begin{equation}\label{f:critical0}  V _{ K} = V _ {K_0}   \quad \Rightarrow\quad K = K _0  \,.
 \end{equation}  
The relevance of this question lies in particular in its connection with one of the major open problems  in convex geometry, namely the validity of the
 log-Brunn-Minkowski inequality: 
\begin{equation}\label{f:logBM}  \big  | (  1-\lambda) \cdot K +_0 \lambda \cdot  L  \big |  \geq  |K|^ { 1- \l}  |L |^ \l  \qquad \forall K, L \in \mathcal K ^ n _*\,, \ \forall \l \in [0, 1]\,,
\end{equation}
where 
\[
( 1-\lambda) \cdot K +_0 \lambda \cdot  L := 
\Big \{ x \in \R ^n \ :\ x \cdot \xi \leq h _ K ( \xi) ^ { 1- \l} h _ L (\xi) ^ {\l} \quad \forall \xi \in  \mathbb S ^ { n-1} \Big \}\,,
\]
and $h_K$, $h_L$ denote the support function of $K$ and $L$ respectively.

Inequality \eqref{f:logBM}, which is a strengthening of the classical Brunn-Minkowsi inequality,  has been proved  in dimension $n=2$ in \cite{BLYZ}, while it is currently open in higher dimensions (see the recent  paper \cite{Bor0} for an extensive survey about the state of the art in this topic, and also our previous paper \cite{CF12} for a related functional version). 
As shown in \cite{BLYZ}, proving the log-Brunn-Minkowski inequality is equivalent to proving that,
 for every fixed $K \in \mathcal K ^n_*$, the minimization problem
 \begin{equation}\label{f:minpb0} \min \Big \{ \frac{1}{|K | } \int_{\mathbb S ^ {n-1}}  \!\!\! \log   (h _L  )    d  V _K \ : \ L \in \mathcal K ^ n _* \, , \  |K|=  |L| \Big \}\end{equation}
 is solved by $K$ itself.   Moreover, it turns out that a solution  $K_0$ to problem \eqref{f:minpb0} exists and is a critical set, in the sense that it satisfies the equality of measures $ V _{ K _0} =  V _ K$. Hence the connection with the implication \eqref{f:critical0}. For $n = 2$, 
 B\"or\"oczky-Lutwak-Yang-Zhang \cite{BLYZ} proved that such implication holds true, thus obtaining also the log-Brunn-Minkowski inequality. 

To close the circle, the minimization problem \eqref{f:minpb0} is not unrelated with the evolution of convex bodies by their Gaussian curvature.   
Indeed, in the particular case $K = B$, the integral in \eqref{f:minpb0} is  an entropy functional, firstly considered by Firey,
which decreases along the Gaussian curvature flow (and actually a suitable generalization of this entropy monotonicity property
is used by Guan-Ni to obtain  \eqref{f:critical00}  for bodies with centroid at the origin).

\medskip
In this work we attack the new problem of studying the above topics  when the volume functional is replaced by some variational energy, such as torsional rigidity, principal Dirichlet Laplacian eigenvalue, or Newtonian capacity.  These functionals are object of study in many problems of classical and modern Calculus of Variations, such as isoperimetric type inequalities, concavity inequalities, inequalities involving polarity. In many cases,  the behaviour of these functionals 
turn out to resemble that  of the volume functional, in particular concerning the validity of inequalities of Brunn-Minkowski type (see \cite{Col2005}). 

Thus, studying variational results \`a la Firey  seems to be a  very natural direction, which is, to the best of our knowledge, completely unexplored. 
To precise what we intend, let us  focus our attention in particular on the case of torsional rigidity. 
In the physical case when $K \subset \R ^2$ is the cross section of a cylindrical rod $K\times \R$ under torsion, its torsional rigidity $\tor (K)$ is
the torque required for unit angle of twist per unit length.  From an analytical point of view,  when 
 $K$ is more in general a $n$-dimensional convex body, 
 its torsional rigidity  (often abbreviated as torsion) is given by
$$ \tor (K) = \int_{K} u_K \, dx \,,$$ 
 where  $u_K $ is the torsion function of $K$, i.e.\ the unique solution to the Dirichlet problem
\begin{equation*}
\left\{
\begin{array}{ll}
-\Delta u=1 \ &\hbox{ in }  {\rm int}\, K  \\
u=0 \ & \hbox{ on  } \partial K\, .
\end{array}\right.
\end{equation*}

A classical result by Dahlberg \cite{Dahl} ensures  that, for any $K \in \mathcal K ^n$, the gradient of  the torsion function  is well-defined $\mathcal H ^ {n-1}$-a.e. 
on $\partial K$, and belongs to $L ^ 2 (\partial K)$.    
Moreover, it is also well-known that  the Hadamard first variation of torsion
can be expressed through an integral formula,
which is the perfect analogue of the one valid for volume,  
replacing the surface area measure $S_K$  by the {\it torsion first variation measure}    $\mu _K$,   defined by 
\begin{equation}\label{f:firstvar} 
 \mu _K := (\nu _K) _\sharp \big (  |\nabla u_K |^ 2  {\mathcal H} ^ {n-1} \res \partial K   \big ) \,,
\end{equation}  
$(\nu _ K )\sharp$ denotes the push forward through the Gauss map of $K$.
By pursuing the analogy with the case of  volume, we are led to introduce  the  {\it cone torsion measure} of $K$,  as the positive measure on ${\mathbb S} ^ {n-1}$ defined by
\begin{equation*}
\tau _K := h _K \,  {\mu _K} \,,
\end{equation*} 
where  $h _K$ denotes the support function of $K$.

Relying on this new definition, it is natural to investigate the rigidity question analogue to the one of Firey, namely whether, 
for a given $K \in \mathcal K ^n _*$, 
 \begin{equation*}
\tau _{ K } =  \tau _ B   \quad \Rightarrow\quad K = B  \,.
 \end{equation*}  
In the setting of smooth convex bodies, the equality $\tau _{ K } =  \tau _ B$  amounts to ask that the torsion function  $u _ K$  satisfies, for some positive constant $c$, 
the  boundary condition 
\begin{equation*}
|\nabla u _K|^2  {x\cdot \nu _K } = c  \, {\curv }_K \qquad \text{ on } \partial K \,, 
\end{equation*} 
where $\curv_K$ denotes the Gaussian curvature of $\partial K$. 
Hence, the corresponding rigidity problem consists in investigating symmetry of smooth convex bodies $K$  such that 
the following overdetermined boundary value problem  admits a solution: 
 \begin{equation}
\label{f:prob10}
\begin{cases}
-\Delta u = 1 & \text{in}\  {\rm int} \, K ,\\
u = 0 & \text{on}\ \partial K,\\
|\nabla u|^2 \displaystyle{ {x\cdot \nu_K }}= c \, {\curv}_K  
& \text{on}\ \partial K\,.
\end{cases}
\end{equation}
This problem is not covered by the very vast literature on Serrin-type problems \cite{Se}, as the simultaneous presence of the support function and of curvature in the overdetermined boundary condition is completely new; we limit ourselves to quote the papers 
\cite{BGNT, f} about overdetermined problems in which curvatures are involved.

In Theorem \ref{t:reg1} we establish symmetry for problem \eqref{f:prob10}, under the weaker assumption that $K$ has its centroid at the origin. At present, we do not know if the assumption on the centroid can be removed. The proof of Theorem \ref{t:reg1}  is obtained by an ad-hoc combination of different inequalities, such as the Saint-Venant inequality for torsion, and the isoperimetric inequality for the $2$-affine surface area (or alternatively, an isoperimetric-type inequality proved in \cite{BFL12} and Blasckhe-Santal\'o inequality). 

In the setting of arbitrary  convex bodies, 
 the equality $\tau _{ K } =  \tau _ B$ 
 cannot be formulated any longer as a pointwise equality for  the gradient of the torsion function along the boundary; it just   
 tells  the measure $\tau _K$ is a constant multiple of the diffuse measure on the sphere, i.e. 
 \begin{equation}\label{f:weak}
\tau _K= c \, \mathcal H ^ {n-1} \res { \mathbb S} ^ { n-1}  \,.
\end{equation} 
In Theorem \ref{t:nonreg1} we establish symmetry for  convex bodies with centroid at the origin  satisfying \eqref{f:weak}. We argue by adapting the arguments used in the regular case, but the proof is more delicate, as it involves 
the  regularity properties of convex bodies having an absolutely continuous  surface area measure; in this respect,  we heavily rely on the results proved by Hug in the papers \cite{Hugc, Hug1, Hug2}. 

The symmetry results described so far are related to many further questions. 
In particular, in the spirit of Firey's work,  it is natural to wonder whether 
the condition $\tau _K= \tau _ B$  identifies $K$ as the ultimate shape of some  flow.

To answer this question, we imagine a  new variational flow for worn stones which tumble on an abrasive plane and undertake an energy-based erosion process. Namely, 
we assume that the rate of wear is  proportional to the density   of contact points with the abrasive plane, 
 no longer per unit surface area measure as in \cite{firey},
but per unit torsion first variation measure;  moreover, while in \cite{firey} the rate of wear is also taken to be proportional to the volume, we take it to be proportional to the torsional rigidity.

To write explicitly the problem, let $C (t)$ represent the evolution in time of an initial convex stone $C (0)$, and let $\sigma$ be any small part of  $\partial C (t)$. 
 The measure of the set of directions for which the abrasive plane touches $C(t)$ at points in $\sigma$ is given by $\mathcal H ^ {n-1} (\nu _t (\sigma))$, where $\nu _ t$ is the Gauss map of $C (t)$.
 By a standard change of variables (see Proposition \ref{p:Hug} (e)), we have 
   $$\mathcal H ^ {n-1} (\nu_t (\sigma)) = \int _{\sigma} \curv(t, y) \, d \mathcal H ^ {n-1}(y) \,,$$ 
  where $\curv(t, y)$ denotes the Gauss curvature of $\partial C ( t) $ at the point $y$.    
   Recalling the definitions of surface area measure and of torsion first variation measure for the convex body $C(t)$,  the above integral can also be written as 
   $$  \int _{\nu_t (\sigma)} \curv  (t , \nu _t ^ {-1} (\xi)) \, d S _{C(t)}  \qquad \text{ or } \qquad   \int _{\nu_t (\sigma)}  \frac{ \curv (t, \nu_t ^ { -1} ( \xi ) )}{|\nabla u _t( \nu_t ^ { -1}  (\xi)) | ^ 2}  \, d \mu  _{C(t)}   \,,$$ 
   where $u (t, \cdot)$ is the torsion function of $C (t)$. 
   Thus we see that, while   contact points have density $\curv  (t , \nu _t ^ {-1} (\xi))$ with respect to the surface area measure $S_{C ( t)}$, they have density 
$ \frac{ \curv (t, \nu_t ^ { -1} ( \xi ) )}{|\nabla u( t, \nu_t ^ { -1}  (\xi)) | ^ 2} $ with respect to the first variation measure $\mu _ { C ( t) }$ in \eqref{f:firstvar}.
 We conclude that the variational analogue of Firey's problem when replacing volume by torsion reads as follows: 
\begin{equation}
\label{f:evolution}
\begin{cases}
 \displaystyle  \frac{ \partial h}{\partial t} ( t, \xi) =   - \costa\tor (C( t))  \frac{ \curv (t, \nu_t  ^ { -1} ( \xi ) )}{|\nabla u( t, \nu_t ^ { -1}  (\xi)) | ^ 2}  & \text{ on }  (0, T) \times {\mathbb S} ^ { n-1} ,\\
h ( 0, \xi) = h _ 0 ( \xi)  & \text{on} \ {\mathbb S} ^ {n-1},\\
h ( t, \xi) \leq \costb& \text{on} \ (0, T) \times {\mathbb S} ^ {n-1}\,.
\end{cases}
\end{equation}
As in Firey's paper, we assume that 
$C (0)$ is smooth and centrally symmetric, and that for some $T>0$, the evolution problem  \eqref{f:evolution}
admits a smooth solution. Under this assumption, 
Theorem \ref{t:firey} establishes that $T = + \infty$,  
that 
convexity and symmetry are preserved along the flow, and that an ultimate shape has the same cone torsion measure as a ball. The proof relies basically on the Brunn-Minkowski inequality for torsion due to Borell \cite{bor1}. 

We remark that proving the existence of a unique smooth solution to  problem \eqref{f:evolution} is an interesting problem in parabolic flows, which is beyond the scopes of this paper, and will be object of further research.

To conclude, another question we wish to address is whether, by analogy with \eqref{f:critical0}, for any pair of convex bodies $ K, K _0 \in \mathcal K ^n _*$,  we have \begin{equation}\label{f:tau0}  \tau _{ K} = \tau _ {K_0}   \quad \Rightarrow\quad K = K _0  \,.
 \end{equation}  
Such implication is in turn related  to the possibility of strengthening the Brunn-Minkowski inequality for torsion 
into  a  logarithmic Brunn-Minkowski inequality of the kind  \begin{equation}\label{f:logBMtau} \tor \big  ((  1-\lambda) \cdot K +_0 \lambda \cdot  L  \big ) \geq \tor(K) ^ { 1- \l} \tor(L) ^ \l  \qquad \forall K, L \in \mathcal K ^ n _*\,, \ \forall \l \in [0, 1].
\end{equation}
 Indeed, by arguing as in the  proof of Lemma 3.2  in \cite{BLYZ},  one can easily check that  \eqref{f:logBMtau} is equivalent to proving that, 
 for every fixed $K \in \mathcal K ^n_*$, the minimization problem
 \begin{equation}\label{f:minpbt0} \min \Big \{ \frac{1}{\tor (K) } \int_{\mathbb S ^ {n-1}}  \!\!\! \log   (h _L  )    d  \tau _K \ : \ L \in \mathcal K ^ n _* \, , \  \tor (K)=  \tor (L) \Big \}\end{equation}
 is solved by $K$ itself;   moreover, if the above problem admits   solution $K_0$, it  must satisfy  the first order optimality condition  $ \tau _{ K _0} =  \tau _ K$ (cf.\ the proof of Theorem 7.1 in \cite{BLYZ}). 

The inequality \eqref{f:logBMtau}  (or the implication \eqref{f:tau0}) is likely very challenging, and  it should be first proved (or disproved)   in dimension $n = 2$, see Section \ref{sec:final} for some related remarks. 

To conclude,  we point out that 
all the results discussed so far  are valid also when  torsional rigidity is replaced
either by the principal frequency of the Dirichlet Laplacian, or (for $n \geq 3$) by  the Newtonian capacity.
To some extent, this may lead to consider the analysis our variational worn stones more righteous, and to believe that the corresponding parabolic flows and logarithmic inequalities are worth of further investigation.
  
\bigskip
{\bf Outline of the paper.}  In Section \ref{sec:prel} we detail the definitions of cone variational measures  and  related representation formulas. 
In Section  \ref{sec:overdet} we prove symmetry for smooth centred convex bodies $K$ 
where
overdetermined boundary value problems such as \eqref{f:prob10} 
admit a solution.  In Section \ref{sec:cone} we 
 extend the rigidity results of the previous section to the general framework of arbitrary centred convex bodies.  
 In Section \ref{sec:flow} we relate  the convex bodies studied in the previous sections to new variational flows. 
 In Section \ref{sec:final} we give a few concluding remarks about logarithmic
Brunn-Minkowski-type inequalities. \bigskip 
\section{Cone variational measures and related representation formulas}\label{sec:prel} 

\medskip
Let $\mathcal K ^n$ be the class of convex bodies  in $\R^n$ with the origin in their interior.  In particular, we are going to consider convex bodies 
 in $\mathcal K ^n$ whose centroid $${\rm cen} (K) = \frac{1}{|K|} \int_K  x \, dx$$
 coincides with the origin.  Here and below, we indicate by $| \cdot |$ the volume functional on $\mathcal K ^n$. 
 We shall also consider the subclass  $\mathcal K ^ n _*$  of centrally symmetric convex bodies. 
 
 Given $K \in \mathcal K ^n$,  we denote by $h _ K$ and $\nu _K$ respectively its support function and Gauss map. 
By definition, if $m$ is a measure on $\partial K$,  its push-forward $( \nu _K )_ \sharp(  m) $ through the Gauss map is the measure on $\mathbb S ^ {n-1} $ defined by 
\[
\int_{\mathbb S ^ {n-1}} \varphi  \, d ( \nu _K )_ \sharp (m )  = \int _{\partial K} \varphi  \circ \nu _K   \, dm  
\qquad \forall \varphi \in \mathcal C (\mathbb S ^ {n-1}) \,.
\] 

We recall that the {\it surface area measure} $S_K$ and the {\it cone volume measure} $V_K$ are  defined respectively by 
$$S_K = ( \nu _K )_ \sharp (  {\mathcal H} ^ {n-1} \res \partial K    ) \, , \qquad V_K = h _ K  S_K\,;
$$ 
moreover, denoting by $|V_K|$ the total variation of $V_K$,  it holds that
$$\begin{array}{ll}
& \displaystyle| K|  = \frac{1}{ n} \int _{ \mathbb S ^ {n-1} } h _ K \, d S _K  = \frac{1}{n} |V_K| \,,
\\ \noalign{\bigskip} 
& \displaystyle \frac{d}{dt} |  K + t L | \Big |_{ t = 0 ^ + }   =   \int _{ \mathbb S ^ {n-1} } h _ L \, d S_K \,.
\end{array}$$ 
The name cone volume measure is motivated by the fact that, when $K$  is a polytope with facets $F_i$ and unit outer normals $\nu _i$, it holds that
\begin{equation}\label{f:polytopes} V_K =  \sum _i  |\Delta (o, F_i ) |  \delta _{\nu _i }\,,  \end{equation} 
where $\delta _{\nu _i}$ is a Dirac mass at $\nu _i$ and $\Delta (o, F _i)$ is the  cone with apex $o$ and basis $F_i$. 
In recent years, this notion of cone volume measure has been widely studied (see  \cite{BH15, BoroHenk, BGMN, BLYZ, BLYZ2, GM87,ludwig, LR10, LYZ05, naor1, naor2, PW12, stancu12, zhu1, zhu2}). 

Let now  $F (K)$ be one of the following variational energies: torsional rigidity $\tor (K)$, first Dirichlet Laplacian eigenvalue $\lambda _ 1 ( K)$, or Newtonian capacity $\Cap (K)$ (the latter in dimension $n \geq 3$).  We denote by $u _K$ the corresponding  ground state, defined as the unique solution in $\Omega= {\rm int} K$ to the following 
elliptic boundary value problems (in the second case normalized so to have unit $L ^2$-norm):
$$
\begin{cases} 
-\Delta u=1  &\hbox{ in }  \Omega  \\
u=0 \ & \hbox{ on  } \partial \Omega
\end{cases}
\qquad
\begin{cases}
-\Delta u = \lambda_1(\Omega) u & \text{in}\ \Omega,\\
u = 0 & \text{on}\ \partial \Omega
\end{cases}
 \qquad 
\left\{
\begin{array}{ll}
\Delta u= 0 \ &\hbox{ in } \R^n \setminus K \\
\noalign{\smallskip} 
u=1 \ & \hbox{ on  } \partial K \\
\noalign{\smallskip} 
 u (x)  \to 0 & \hbox{ as  } |x| \to + \infty \,. \\
\end{array}\right.
$$

It is well known that, by analogy with the case of volume, each of these functionals and its Hadamard derivative satisfy the following representation formulas (see respectively \cite{CoFi},  \cite{Je} and \cite{Je96})
$$\begin{array}{ll}
& \displaystyle F ( K) = \frac{1}{|\alpha| } \int _{ \mathbb S ^ {n-1} } h _ K \, d \mu _K 
\\ \noalign{\bigskip} 
& \displaystyle \frac{d}{dt} F ( K + t L ) \Big |_{ t = 0 ^ + }   = {\rm sign} (\alpha)  \int _{ \mathbb S ^ {n-1} } h _ L \, d \mu _K \,.
\end{array}$$ 
Here $\alpha$ denotes 
the homogeneity degree of $F$ under domain dilation (which equals 
equal to $n + 2$ for torsion,  $-2$ for the eigenvalue, and $n -2$ for capacity) and 
  $\mu _K$ is the {\it first variation measure} of $F$, defined by 
\begin{equation}\label{f:mufir} 
\mu _K :=(\nu_K ) _\sharp \big (  |\nabla u_K |^ 2  {\mathcal H} ^ {n-1} \res \partial K   \big )
\end{equation}

We set the following: 

\begin{definition}[cone variational measures] \label{def:cone} 
For any of the functionals above, with any $K \in \mathcal K ^n$ we associate the  positive measure  on $\mathbb S ^ {n-1}$ defined by 
$h _ K \, \mu _K$, being $\mu _K$ given by  \eqref{f:mufir}, and we 
denote it  respectively by 
 $\tau _K$ ({\it cone torsion measure}), by 
  $\sigma _K$ ({\it cone eigenvalue measure}) and by 
  $\eta _K$  ({\it cone capacitary measure}).  
\end{definition}
Let us point out that the terminology in Definition \ref{def:cone} is chosen to emphasize the analogy with the case of volume, though this is 
somehow an abuse; indeed, the analogue of the equality \eqref{f:polytopes}  is clearly false for our variational funtionals, since they
are not additive under the decomposition of a polytope  in $\mathcal K ^n$ into cones with apex $o$ and bases at its facets. 

Let us also notice that  the above representation formulas  can be rewitten as integrals over the boundary and 
 in terms of the total variations of the corresponding cone variational measure, respectively as 
\begin{equation} \label{f:poho}
\tor(K) = \frac{1}{n+2} \int_{\partial K} |\nabla u _K|^2\, x\cdot \nu _K\, d\haus 
= \frac{1}{n+2} 
| \tau _K | \,,
\end{equation} 
\begin{equation} \label{f:pohoL}
\lambda _1 (K) = \frac{1}{2} \int_{\partial K} |\nabla u_K|^2\, x\cdot \nu_K\, d\haus 
= \frac{1}{2} 
| \sigma _K|   \,,
\end{equation}
\begin{equation} \label{f:pohoC}
\Cap (K) = \frac{1}{n-2} \int_{\partial K} |\nabla u_K|^2\, x\cdot \nu_K\, d\haus
= \frac{1}{n-2} | \eta _K |  \qquad (n \geq 3)\,.
\end{equation}

\begin{remark}\label{r:weak} 
The cone variational measures introduced in Definition \ref{def:cone} are weakly$^*$ continuous with respect to the convergence of convex bodies in Hausdorff distance. Indeed, if $K_n$ converge to $K_\infty$ in Hausdorff distance, the support functions of $K _n$ converge uniformly to the support function of $K_\infty$, 
while the  first variation measures of $K _n$ converge weakly$^*$ to the first variation measure of $K_\infty$ (cf.\ respectively \cite[Thm.~3.1]{Je96} for capacity,  \cite[Section~7]{Je} for the first eigenvalue, and \cite[Thm.~6]{CoFi} for torsion). 
\end{remark}

\section{Rigidity results for smooth convex bodies}\label{sec:overdet} 

In this section we deal with overdetermined boundary value problems on smooth convex bodies $K$, for which the 
Gauss map $\nu _K$ and the Gaussian curvature $G_K$ can be classically defined. 
Whenever no confusion may arise, we omit the index $K$ and we simply write $\nu$ and $G$. 
We denote by $B$ a generic ball, by $B _ 1$ the unit ball, and by $\omega _n$ its Lebesgue measure. 

\begin{theorem}\label{t:reg1} 
Let $K \in \mathcal K ^n$ have its centroid at the origin, and boundary of  class $\mathcal C^2$.
Assume that, for some constant $c>0$,  
 there exists a solution to the  following overdetermined boundary value problem on $\Omega := {\rm int } K$:
\begin{equation*}
\begin{cases}
-\Delta u = 1 & \text{in}\ \Omega,\\
u = 0 & \text{on}\ \partial\Omega,\\
|\nabla u|^2 \displaystyle{ {x\cdot \nu }}= c \, {\curv}  
& \text{on}\ \partial\Omega\,.
\end{cases}
\end{equation*}
Then $K$ is a ball.
\end{theorem}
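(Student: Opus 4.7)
The plan is to recast the overdetermined boundary condition as a measure identity for the first variation measure $\mu_K$ and then sandwich the integral $\int_{\mathbb S^{n-1}} 1/h_K\, d\haus$ between two sharp inequalities that match only for balls. The three building blocks will be Borell's Brunn--Minkowski inequality for torsion, the Saint-Venant inequality, and the Blaschke--Santal\'o inequality. As a first step, recalling the definition \eqref{f:mufir} of $\mu_K$, the change of variables $y=\nu_K(x)$ through the Gauss map (whose Jacobian is $G$) rewrites the pointwise condition $|\nabla u|^2\,x\cdot\nu=c\,G$ on $\partial K$ as
\[
d\mu_K(\xi)=\frac{c}{h_K(\xi)}\,d\haus(\xi) \qquad\text{on } \mathbb S^{n-1};
\]
integrating against $h_K$ and using the representation formula \eqref{f:poho} determines $c$ explicitly via $(n+2)\tor(K)=c\,n\omega_n$.

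Next, I would apply Borell's Brunn--Minkowski inequality, namely the concavity of $\tor^{1/(n+2)}$ under Minkowski sums. Its first-order form is
\[
\int_{\mathbb S^{n-1}} h_L\,d\mu_K\;\ge\;(n+2)\,\tor(K)^{(n+1)/(n+2)}\,\tor(L)^{1/(n+2)}\qquad\forall L\in\mathcal K^n,
\]
with equality iff $K$ and $L$ are homothetic. Choosing $L=B_1$, the left-hand side equals $c\int_{\mathbb S^{n-1}}1/h_K\,d\haus$, whereas $\tor(B_1)=\omega_n/[n(n+2)]$. After substituting the value of $c$ and estimating $\tor(K)$ from above by the Saint-Venant inequality $\tor(K)\le |K|^{(n+2)/n}/[n(n+2)\omega_n^{2/n}]$, a careful accounting of the constants yields the lower bound
\[
\int_{\mathbb S^{n-1}}\frac{1}{h_K}\,d\haus\;\ge\;\frac{n\,\omega_n^{(n+1)/n}}{|K|^{1/n}}.
\]

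For the reverse bound, I would apply Jensen's inequality on $\mathbb S^{n-1}$ (concavity of $t\mapsto t^{1/n}$) to pass from $\int 1/h_K\,d\haus$ to $(n|K^*|)^{1/n}$ up to the factor $(n\omega_n)^{(n-1)/n}$, and then invoke Blaschke--Santal\'o $|K|\,|K^*|\le\omega_n^2$, which is applicable precisely because $K$ is centred at the origin. The outcome matches the previous lower bound term by term, so every inequality in the chain must be saturated. The equality case of Saint-Venant alone already forces $K$ to be a ball, and equality in Blaschke--Santal\'o (or equivalently in Borell's inequality with $L=B_1$) confirms it is centred at the origin. The main obstacle, more computational than conceptual, is verifying that the numerical constants from Saint-Venant, Borell, and Blaschke--Santal\'o telescope exactly to the same quantity $n\omega_n^{(n+1)/n}/|K|^{1/n}$, so that the sandwich closes sharply; once this bookkeeping is in place, the equality cases do all the geometric work.
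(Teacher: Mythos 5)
Your argument is correct, and it is in substance the paper's ``second method'' for the lower bound, rearranged: the paper derives two expressions for $c$ (via \eqref{f:poho} and Saint-Venant for one bound, and via the isoperimetric-type inequality \eqref{f:BFL1} of Bucur--Fragal\`a--Lamboley together with H\"older, the polar-volume formula and Blaschke--Santal\'o \eqref{f:BS} for the other), whereas you sandwich $\int_{\mathbb S^{n-1}} h_K^{-1}\,d\haus$; the chains of inequalities are the same up to algebra, and your constant bookkeeping checks out (both bounds equal $n\,\omega_n^{(n+1)/n}|K|^{-1/n}$). The one genuine substitution is that in place of \eqref{f:BFL1} you use the first-order (Minkowski-type) form of Borell's Brunn--Minkowski inequality for torsion, $\int_{\mathbb S^{n-1}} h_L\,d\mu_K \geq (n+2)\,\tor(K)^{(n+1)/(n+2)}\tor(L)^{1/(n+2)}$, tested at $L=B_1$; since $\int_{\mathbb S^{n-1}} d\mu_K=\int_{\partial K}|\nabla u_K|^2\,d\haus$, this is exactly what \eqref{f:BFL1} says in this situation (and it is the same inequality \eqref{f:M1} the paper uses later for the flow), so your route is self-contained where the paper cites \cite{BFL12}, but not conceptually different. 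Two small remarks: the equality statements you invoke for Borell and Blaschke--Santal\'o are not needed, since, as you note, matching of the two ends forces equality in the Saint-Venant step alone, whose equality case already gives that $K$ is a ball (and the ball is centred at the origin simply because the centroid is assumed to be there); and the measure identity $d\mu_K=(c/h_K)\,d\haus$ is just the push-forward restatement of the boundary condition via the change of variables $\int_{\partial K}\varphi(\nu)\,\curv\,d\haus=\int_{\mathbb S^{n-1}}\varphi\,d\haus$, the same formula \eqref{f:intk} the paper uses, so no extra regularity beyond $\mathcal C^2$ is being smuggled in.
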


The proof  of Theorem \ref{t:reg1} is based on 
the application of different inequalities for convex bodies,
some involving torsional rigidity and some others being purely geometrical. 
We list  them below:

\begin{itemize}

\item[(i)] The Saint-Venant inequality for torsion \cite{posz, H17}:
\[
\frac{\tor(K)}{|K|^{\frac{n+2}{n}}}
\leq
\frac{\tor(B)}{|B|^{\frac{n+2}{n}}}\,,
\]
with equality if and only if $K$ is a ball.
More explicitly, since $\tor (B_1) = \frac{\omega_n} {n(n + 2)}  $, 
\begin{equation}
\label{f:isotor}
\tor(K) \leq \frac{|K|^\frac{n+2}{n}}{n(n+2) \omega_n^{\frac{2}{n}}}\,,
\end{equation}
with equality if and only if $K$ is a ball.

\medskip
\item[(ii)] The isoperimetric inequality of the $p$-affine surface area in the case $p=2$:
\begin{equation}
\label{f:paffine}
\Theta_2(K) :=
\int_{\partial K}\frac{\curv^{\frac{2}{n+2}}}{(x\cdot\nu)^{\frac{n}{n+2}}}
\, d\haus
\leq
n \, \omega_n^\frac{4}{n+2}\, |K|^{\frac{n-2}{n+2}}\,,
\end{equation}
with equality if and only if $K$ is an ellipsoid
(see \cite[Theorem~4.8]{Lut96}, \cite[Theorem~4.2]{WerYe} or \cite[{formula (10.49)}]{Sch2}).

\medskip
\item[(iii)]  The isoperimetric-type inequality:
\begin{equation}\label{f:BFL1}
\frac{ \tor (K) ^ { 1 - \frac{1}{n+2}   }} {\int_{\partial K} |\nabla u_K| ^ 2 }
\leq 
\frac{ \tor (B) ^ { 1 - \frac{1}{n+2}  }  } {\int_{\partial B} |\nabla u _B| ^ 2 }
 \end{equation} 
 see \cite[Theorem 3.15]{BFL12}. 

\medskip
\item[(iv)] The Blaschke-Santal\'o inequality \cite{B23, S49} for convex bodies with centroid at the origin:
\begin{equation}\label{f:BS}
|K| |K ^o| \leq \omega _n ^ 2\,, 
\end{equation}
with equality if and only if $K$ is an ellipsoid (see \cite[Theorem $1'$]{Lut95}).
\end{itemize}

\bigskip 

{\it Proof of Theorem \ref{t:reg1}}. We are going to obtain 
an upper bound and a lower bound for the constant $c$ appearing in the overdetermined boundary condition, 
starting from two different  expressions of it. 
For the upper bound we use inequality (i), while the lower bound can be obtained by two different methods:
either by using the inequality (ii), or by using the inequalities (iii)-(iv).  
We provide both methods  because, as we shall see,  the first one works to extend the result to Newtonian capacity, while  the second one works for the first Laplacian Dirichlet eigenvalue. 
 Since the upper bound and the lower bound turn out to match each other, we conclude that in particular Saint-Venant inequality \eqref{f:isotor} holds as an equality, and hence $K$ must be a ball.

\bigskip
{\it -- Upper bound for $c$.} 
We integrate over the boundary both sides of the pointwise overdetermined condition  
$$|\nabla u|^2\, x\cdot \nu = c\, \curv \qquad \text{ on } \partial K\,.$$ 
By using respectively  the identity 
 \eqref{f:poho}  and the change of variables formula (2.5.29) in \cite{Sch},   we obtain
\[
\int_{\partial K} |\nabla u|^2\, x\cdot \nu \, d \mathcal H ^ {n-1} = (n + 2) \tor (K)\,,
\]
and  \begin{equation}\label{f:intk}
\int_{\partial K}  \curv\, d \mathcal H ^ {n-1} 
= \int _{\mathbb S ^ {n-1} } 1\, d \mathcal H ^ {n-1}  = \mathcal H ^ {n-1} (\mathbb S ^ {n-1}) = n \omega _n \,.
\end{equation} 
Hence we get  \begin{equation}
\label{f:c1}
c = \frac{n+2}{n\, \omega_n}\, \tor(K)\, .  
\end{equation}
Then by using the
Saint-Venant inequality \eqref{f:isotor}  we obtain the upper bound \begin{equation}
\label{f:above}
c \leq \frac{1}{n^2}\, \left(\frac{|K|}{\omega_n}\right)^{\frac{n+2}{n}}\,,
\end{equation}
with equality if and only if $K$ is a ball.

\bigskip
{\it -- Lower  bound for $c$ (first method).} 
We  integrate  
on $\partial K$ the
pointwise overdetermined condition, after rewriting it as  
\[
|\nabla u| = c^{1/2} \, \left(\frac{\curv}{x\cdot \nu}\right)^{1/2} \qquad \text{ on } \partial K\,.
\]
Exploiting the fact that $u$  solves the torsion problem,  we obtain \[
|K| = c^{1/2} \int_{\partial K} \left(\frac{\curv}{x\cdot \nu}\right)^{1/2}
\, d\haus\,,
\]
i.e.
\begin{equation}
\label{f:c2}
c = \frac{|K|^2}{\left[ \int_{\partial K} \left(\frac{\curv}{x\cdot \nu}\right)^{1/2}
\, d\haus\right]^2}\,.
\end{equation}
We now look at the integral appearing  the denominator in the right-hand side of \eqref{f:c2}.
Let us distinguish the cases $n= 2$ and $n \geq 3$. 

If $n = 2$, such integral is exactly  the $2$-affine surface area: 
$$ \int_{\partial K} \left(\frac{\curv}{x\cdot \nu}\right)^{1/2}
\, d\haus = 
 \Theta_2(K)\,.$$ 

If $n\geq 3$, the same integral
can be estimated in terms of the $2$-affine surface area using H\"older's inequality. Specifically, using H\"older's inequality with the conjugate
exponents $\beta = \frac{2n}{n+2}$ and $\beta' = \frac{2n}{n-2}$,
it holds that
\[
\begin{split}
\int_{\partial K} \left(\frac{\curv}{x\cdot \nu}\right)^{1/2}
\, d\haus
& =
\int_{\partial K} \frac{\curv^{1/n}}{(x\cdot \nu)^{1/2}}\,
\cdot \curv^{\frac{2-n}{2n}}
\, d\haus
\\ & \leq
\left[\int_{\partial K} \left(\frac{\curv^{1/n}}{(x\cdot \nu)^{1/2}}\right)^{\beta}
\, d\haus\right]^{1/\beta} \!\!
\cdot
\left[\int_{\partial K} \left(\curv^{\frac{2-n}{2n}}\right)^{\beta'}
\, d\haus\right]^{1/\beta'}
\\ & =
\left[\int_{\partial K}\frac{\curv^{\frac{2}{n+2}}}{(x\cdot\nu)^{\frac{n}{n+2}}}
\, d\haus
\right]^{\frac{n+2}{2n}}\,
\!\! \cdot  
\left[\int_{\partial K} \curv\, d\haus\right]^{\frac{n-2}{2n}}
\\ & =
\left[ \Theta_2 (K \right ) ]^{\frac{n+2}{2n}}\,
\cdot
\left[ n \omega _n \right  ]^{\frac{n-2}{2n}}
\,.
\end{split}
\]
Hence, for every $n \geq2$,  we have that
\begin{equation}
\label{f:es1}
\int_{\partial K} \left(\frac{\curv}{x\cdot \nu}\right)^{1/2}
\, d\haus
\leq
\Theta_2(K)^{\frac{n+2}{2n}}\, (n\, \omega_n)^{\frac{n-2}{2n}}
\leq
n\, \omega_n^{\frac{n+2}{2n}}\, |K|^{\frac{n-2}{2n}}\,,
\end{equation}
where in the second inequality we have used the $2$-affine isoperimetric inequality \eqref{f:paffine}.

From \eqref{f:c2} and \eqref{f:es1}, we get 
\begin{equation}
\label{f:below}
c \geq
\frac{|K|^2}{n^2\, \omega_n^{\frac{n+2}{n}}\, |K|^{\frac{n-2}{n}}}
= 
\frac{1}{n^2}\, \left(\frac{|K|}{\omega_n}\right)^{\frac{n+2}{n}}\,.
\end{equation}

\bigskip 
{\it -- Lower  bound for $c$ (second method).} 
We integrate  
over $\partial K$ the
pointwise overdetermined condition, after rewriting it as  
$$
|\nabla u|^2  = c\, \frac{ \curv }{x\cdot \nu }  \qquad \text{ on } \partial K\,.
$$
Then, using also the expression of $\tor (K)$ in \eqref{f:c1}, 
the isoperimetric inequality \eqref{f:BFL1} reads:
$$\frac{  c ^   { 1-\frac{1}{n+2}} \left ( \frac{n \omega _n}{n+2}  \right )  ^ { 1-\frac{1}{n+2}}   }{c \int_{\partial K} \frac{ \curv}{x \cdot \nu}} \leq \frac{ \tor  (B)^ { 1-\frac{1}{n+2}  }}{ \int_{\partial B} |\nabla u _B| ^ 2 }\,.$$ 
Rising the above inequality to power $(n+2)$, and setting for brevity 
\begin{equation*}
\Lambda (B):= \frac{ \tor  (B)^ {1-\frac{1}{n+2}  }}{ \int_{\partial B} |\nabla u _B| ^ 2 }\, ,
\end{equation*} 
we get 
\begin{equation}\label{f:uc1bis} c \geq \frac{  \left ( \frac {n \omega_n}{n +2}  \right )  ^ { n+1}  }{  \Lambda (B) ^ {n+2} \left ( \int_{\partial K} \frac{ \curv}{x \cdot \nu}  \right ) ^{ n+2} }  \,.
\end{equation} 

We now look at the integral appearing  the denominator in the right-hand side of \eqref{f:uc1bis}.
We transform it into an integral on $\mathbb S ^ { n-1}$ by the classical change of variables formula already quoted above. 
Then, by using H\"older's inequality with conjugate exponents $n$ and $\frac{n}{n-1}$, the well-known representation formula for the volume of the dual body
$|K ^o| = \frac{1}{n} \int_{\mathbb S ^ {n-1} } {h _K ^ {- n} }$, 
and the Blaschke-Santal\'o inequality \eqref{f:BS}, we obtain 
\begin{equation}\label{f:uc21} \begin{array}{ll} 
\displaystyle \int _{\partial K} \frac{ \curv}{x \cdot \nu}    
 = \int _{\mathbb S ^ {n-1} } \frac{ 1}{h _K }  
& \displaystyle  \leq \left (  \int _{\mathbb S ^ {n-1} } \frac{ 1}{h_K ^n}    \right ) ^  {\frac{1}{n}} (n \omega _ n) ^ {\frac{n-1} {n}} 
\\  \noalign{\medskip} 
& \displaystyle = \left (  n |K ^o|     \right ) ^  {\frac{1}{n}} (n \omega _ n) ^ {\frac{n-1} {n}} 
\\  \noalign{\medskip} 
& \displaystyle \leq   {   n  \omega _ n ^ {\frac{n+1} {n}} }{ { |K| ^ {-\frac{1}{n} } } }\,. 
\end{array}
\end{equation} 
We now combine \eqref{f:uc1bis} and \eqref{f:uc21}. Taking also into account that 
$\tor (B_1) = \frac{\omega _n}{n (n+2)}$, and $\int_{\partial B_1} |\nabla u _{B_1}| ^ 2  = \frac{ \omega _n }{n}$, so that 
$$\Lambda (B) ^ {n+2} = \frac{n}{ (n+2) ^ {n+1} \omega _n } \,,$$
we arrive exactly at the inequality \eqref{f:below}.  \qed

\bigskip {\it -- Conclusion.} 
By comparing the upper bound \eqref{f:above} and the lower bound \eqref{f:below}, 
we conclude that the equality sign holds in both inequalities.
In particular, the equality sign in \eqref{f:above} implies that
$K$ is a ball.
\qed 

\bigskip

\begin{theorem}\label{t:reg2} 
Let $K \in \mathcal K ^n$ have its centroid at the origin, and boundary of  class $\mathcal C^2$.
Assume that, for some constant $c>0$,  
 there exists a solution to the  following overdetermined boundary value problem on $\Omega := {\rm int }\,  K$: 

\begin{equation*}
\begin{cases}
-\Delta u = \lambda_1(\Omega) u & \text{in}\ \Omega,\\
u = 0 & \text{on}\ \partial\Omega,\\
|\nabla u|^2 {x\cdot \nu} = c\,  {\curv}
& \text{on}\ \partial\Omega\,.
\end{cases}
\end{equation*}
Then $K$ is a ball.
\end{theorem}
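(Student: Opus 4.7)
The plan is to mirror the \emph{second method} in the proof of Theorem \ref{t:reg1}, which the authors have announced as the one adapting to the eigenvalue setting. As there, the idea is to derive two explicit expressions for the overdetermined constant $c$ by integrating the pointwise identity $|\nabla u|^2\, x\cdot \nu = c\,G$ over $\partial K$ in two different ways; each produces a sharp bound involving an inequality with equality on balls, and matching the two bounds will force $K$ to be a ball.

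For the first expression, I normalize $u$ so that $\int_{\Omega} u^2\,dx = 1$ and integrate the overdetermined condition directly on $\partial K$. The Pohozaev identity \eqref{f:pohoL} turns the left-hand side into $2\lambda_1(K)$, while the change-of-variables formula $\int_{\partial K} G\, d\mathcal{H}^{n-1} = n\omega_n$ handles the right-hand side. Hence
\[
c \;=\; \frac{2\,\lambda_1(K)}{n\,\omega_n}\,.
\]
The Faber-Krahn inequality $\lambda_1(K)\ge \lambda_1(B_1)(\omega_n/|K|)^{2/n}$, which holds with equality if and only if $K$ is a ball, then yields a \emph{lower} bound for $c$ that is sharp precisely on balls.

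For the second expression, I rewrite the boundary condition as $|\nabla u|^2 = c\,G/(x\cdot\nu)$ and integrate over $\partial K$; by the same change of variables, the right-hand side becomes an integral on the sphere, giving
\[
\int_{\partial K}|\nabla u_K|^2\,d\mathcal{H}^{n-1} \;=\; c \int_{\mathbb{S}^{n-1}}\frac{1}{h_K}\,d\mathcal{H}^{n-1}\,.
\]
To upper-bound $c$, I would combine three ingredients: the eigenvalue counterpart of the isoperimetric-type inequality \eqref{f:BFL1} from \cite{BFL12}, which, by scaling invariance (given the $L^2$-normalization of $u$), must take the form
\[
\frac{\lambda_1(K)^{3/2}}{\int_{\partial K}|\nabla u_K|^2\,d\mathcal{H}^{n-1}} \;\le\; \frac{\lambda_1(B)^{3/2}}{\int_{\partial B}|\nabla u_B|^2\,d\mathcal{H}^{n-1}}
\]
(with equality iff $K$ is a ball, for any ball $B$); the identity $\lambda_1(K)=cn\omega_n/2$ from the first step, used to eliminate $\lambda_1(K)$; and the upper bound $\int_{\mathbb{S}^{n-1}} h_K^{-1}\, d\mathcal{H}^{n-1} \le n\,\omega_n^{(n+1)/n}|K|^{-1/n}$ already derived in the proof of Theorem \ref{t:reg1} from H\"older's inequality, the dual volume formula $n|K^o|=\int_{\mathbb{S}^{n-1}} h_K^{-n}\,d\mathcal{H}^{n-1}$, and the Blaschke-Santal\'o inequality \eqref{f:BS}. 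Straightforward algebra should then deliver an \emph{upper} bound for $c$ matching the Faber-Krahn lower bound.

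Since both bounds become equalities on balls, they must coincide for the given $K$; the equality case of Faber-Krahn then forces $K$ itself to be a ball. The main obstacle will be pinning down the precise eigenvalue counterpart of \eqref{f:BFL1} in \cite{BFL12}, verifying its direction and equality case; once that is in hand, the algebraic matching of the two explicit values for $c$ follows the same pattern as in Theorem \ref{t:reg1}.
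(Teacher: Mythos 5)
Your proposal is correct and follows essentially the same route as the paper's proof: the same two expressions for $c$ (integrating the overdetermined condition via \eqref{f:pohoL} and the curvature integral, then rewriting it as $|\nabla u|^2=c\,\curv/(x\cdot\nu)$ and combining the eigenvalue inequality \eqref{f:BFL} of \cite{BFL12} with H\"older, the dual-volume formula and Blaschke-Santal\'o), with the equality case of Faber-Krahn closing the argument. The only difference is in how the matching of the two bounds, i.e.\ the identity $2\lambda_1(B)=\int_{\partial B}|\nabla u_B|^2$ for the $L^2$-normalized eigenfunction, is justified: the paper checks it by an explicit Bessel-function computation, while you infer it from the fact that every inequality used is an equality when $K$ is a centered ball, which is an acceptable shortcut.
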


The proof of Theorem \ref{t:reg2} is similar to the one of Theorem \ref{t:reg1}; it  is based on the following inequalities:
\begin{itemize}

\item[(i)] The Faber-Krahn inequality
\begin{equation}
\label{f:FK}
 \lambda _ 1 (K) |K| ^ { 2/n} \geq  \lambda _ 1 (B) |B| ^ { 2/n} \,,
\end{equation}
with equality if and only if $K$ is a ball, see 
e.g. \cite[Section 3.2]{H06}.

\medskip

\item[(ii)] The isoperimetric-type inequality 
\begin{equation}\label{f:BFL}
\frac{ \lambda _ 1 (K) ^ { 3/2}  } {\int_{\partial K} |\nabla u_K| ^ 2 }
\leq 
\frac{ \lambda _ 1 (B) ^ { 3/2}  } {\int_{\partial B} |\nabla u _B| ^ 2 }
 \end{equation} 
 see \cite[Theorem 3.15]{BFL12}. 
 
 \medskip 
 \item[(iii)] The Blaschke-Santal\'o inequality \eqref{f:BS}. 

\end{itemize}

\bigskip 

{\it Proof of Theorem \ref{t:reg2}}.
Similarly as in the proof of Theorem \ref{t:reg1}, we are going to provide a matching upper and lower bound for the constant $c$ appearing in the overdetermined boundary condition.
A lower bound on $c$ is obtained by arguing as done to obtain an upper bound in the proof of Theorem \ref{t:reg1}. 
 First we integrate over $\partial K$ the overdetermined condition written as 
$$|\nabla u|^2\, x\cdot \nu = c\, \curv \qquad \text{ on } \partial K\,.
$$ Using  the  identity 
 \eqref{f:pohoL}, and a change of variables as in 
 \eqref{f:intk}, we deduce that 
\begin{equation}
\label{f:cc1}
c = \frac{2}{n\, \omega_n}\, \lambda_1(K).
\end{equation}
Then, by using the Faber-Krahn inequality \eqref{f:FK}, we obtain the lower bound 
\begin{equation}
\label{f:lc}
c \geq \frac{2 }{n} \omega _n ^ {\frac{2-n } {n} }  \lambda _ 1 (B)  |K| ^{-\frac{2}{n}}   \,,
\end{equation}
with equality if and only if $K$ is a ball.

An upper bound on $c$ is obtained by arguing as done to obtain a lower bound in the proof of Theorem \ref{t:reg1}, second method. Namely, we integrate  
on $\partial K$ the
overdetermined condition, after rewriting it as  
$$
|\nabla u|^2  = c\, \frac{ \curv }{x\cdot \nu }  \qquad \text{ on } \partial K\,.
$$
By using also the expression of $\lambda _1(K)$ in \eqref{f:cc1},
the isoperimetric inequality \eqref{f:BFL} reads:
$$\frac{  c ^   { \frac{3}{2}} \left ( \frac{n \omega _n}{2}  \right )  ^ { \frac{3}{2}}   }{c \int_{\partial K} \frac{ \curv}{x \cdot \nu}} \leq \frac{ \lambda _ 1  (B)^ {\frac{3}{2}  }}{ \int_{\partial B} |\nabla u _B| ^ 2 }\,.$$ 
Rising the above inequality to power $2$, and setting for brevity 
\begin{equation*}
\Lambda (B):= \frac{ \lambda _ 1  (B)^ {\frac{3}{2}  }}{ \int_{\partial B} |\nabla u _B| ^ 2 }\, ,
\end{equation*} 
we obtain

\begin{equation*}
c \leq \left ( \frac {2}{n \omega _n}  \right )  ^ { 3}  \Lambda (B) ^ 2 \left ( \int_{\partial K} \frac{ \curv}{x \cdot \nu}  \right ) ^ 2 \,.
\end{equation*}

For the integral appearing at the right hand side of the above inequality, 
the estimate \eqref{f:uc21} 
found in the proof of Theorem \ref{t:reg1} holds.
Hence we get
\begin{equation} \label{f:uc} 
c\leq  \frac{8  }{n} \omega _n ^ { \frac{2-n}{n} } \Lambda (B) ^ 2  { |K | ^ {-\frac{2}{n}}} \,.
\end{equation} 
To conclude the proof, it remains to show that  the expressions at the right hand sides of the lower bound  \eqref{f:lc} and of the the upper bound \eqref{f:uc} coincide. Indeed in this case  the Faber-Krahn inequality must hold as an equality, yielding that $K$ is a ball. The matching of our upper and lower bounds corresponds to the equality  
\begin{equation}\label{f:mir}2 \lambda _ 1 (B) = \int _{\partial B} |\nabla u _ B| ^ 2 \,.
\end{equation} 

The validity of \eqref{f:mir}  is checked through some direct computations  involving Bessel functions, that we enclose for the sake of completeness. We have (see for instance \cite[Section 4]{KKK})
\begin{equation}\label{f:mir1} \lambda _ 1 ( B_1) = (j _{\frac{n}{2}-1, 1} ) ^ 2\,,
\end{equation} 
where  $j _{\frac{n}{2}-1, 1}$ is the first zero of the Bessel function
$J _{\frac{n}{2}-1}$, and 
$$u _ {B_1} (r) = C  r ^ {1- \frac{n}{2} } J _{\frac{n}{2}-1} ( j _{\frac{n}{2}-1, 1}  r)\,.$$
The value of the constant $C$ is determined by imposing that $u _ { B_1}$ has unit $L ^2$-norm, yielding
$$C ^ 2 = \Big ( n \omega _n  \int_0 ^ 1 J ^2 ( j _1  r)  \, r \, dr  \Big ) ^ { -1} $$
(here and in the sequel we have written for brevity  
$J:= J _{\frac{n}{2}-1}$ and $j _ 1:=  j _{\frac{n}{2}-1 , 1}$).
Since, by known properties of Bessel functions (see \cite[11.4.5]{AS64}) we have 
 $$\int_0 ^ 1 J ^ 2 (j _ 1 r) r \, dr = \frac{1}{2} (J' (j_1) ) ^ 2 \, ,$$   
we infer that
$$C ^2 =  \frac{2}{n \omega _n}   \frac{1}{(J' (j _1 ) ) ^ 2  } \,.$$ 
Hence,
\begin{equation}\label{f:mir2}   \int _{\partial B_1} |\nabla u _ {B_1}| ^ 2 = C^ 2 j _1 ^ 2(J' (j _1 ) ) ^ 2 =  2 j _ 1 ^ 2 \,.
\end{equation} 
From \eqref{f:mir1} and \eqref{f:mir2}, we see that \eqref{f:mir} is satisfied and our proof is achived. 
 \qed

\bigskip

\begin{theorem}\label{t:reg3} 
Let $K \in \mathcal K ^n$ have its centroid at the origin, and boundary of  class $\mathcal C^2$.
Assume that, for some constant $c>0$,  
 there exists a solution to the  following overdetermined boundary value problem on the complement of $K$: 
 \begin{equation*}
\left\{
\begin{array}{ll}
\Delta u= 0 \ &\hbox{ in } \R^n \setminus  K \\
\noalign{\smallskip} 
u=1 \ & \hbox{ on  } \partial K \\
\noalign{\smallskip} 
|\nabla u|^2 {x\cdot \nu} = c  \, {\curv} \ & \hbox{ on  } \partial K
\\
\noalign{\smallskip} 
\lim \limits_{|x| \to + \infty}  u (x) = 0\,. \\
\end{array}\right.
\end{equation*}
 Then $K$ is a ball.
 \end{theorem}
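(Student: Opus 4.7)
I will establish matching upper and lower bounds on the constant $c$, in the spirit of Theorems \ref{t:reg1} and \ref{t:reg2}; the matching will force equality in a sharp isocapacitary inequality, and thereby force $K$ to be a ball. The lower bound comes from integrating the overdetermined condition $|\nabla u|^2(x\cdot\nu)=c\,\curv$ over $\partial K$: using \eqref{f:pohoC} on the left-hand side and $\int_{\partial K}\curv\,d\haus = n\omega_n$ (as in \eqref{f:intk}) on the right-hand side yields
$$c = \frac{(n-2)\,\Cap(K)}{n\,\omega_n};$$
invoking the classical isocapacitary (Szeg\H{o}--P\'olya) inequality $\Cap(K)\geq\Cap(B_1)(|K|/\omega_n)^{(n-2)/n}$, which is sharp precisely on balls, then gives $c\geq(n-2)^2(|K|/\omega_n)^{(n-2)/n}$.

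For the upper bound I follow the first method used in the proof of Theorem \ref{t:reg1}. The key auxiliary identity is
$$\int_{\partial K}|\nabla u|\,d\haus = \Cap(K),$$
which plays the role of $\int_{\partial K}|\nabla u|\,d\haus = |K|$ in the torsion setting. To establish it, integrating by parts over $\R^n\setminus K$ using $\Delta u=0$, $u=1$ on $\partial K$, and the decay $u=O(|x|^{2-n})$ gives $\int_{\R^n\setminus K}|\nabla u|^2\,dx = -\int_{\partial K}\nabla u\cdot\nu\,d\haus = \int_{\partial K}|\nabla u|\,d\haus$; on the other hand, multiplying $\Delta u = 0$ by $x\cdot\nabla u$, integrating over $B_R\setminus K$, and letting $R\to\infty$ produces the Pohozaev-type identity $\int_{\partial K}|\nabla u|^2(x\cdot\nu)\,d\haus = (n-2)\int_{\R^n\setminus K}|\nabla u|^2\,dx$, which combined with \eqref{f:pohoC} gives the claim.

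Rewriting the boundary condition as $|\nabla u|=c^{1/2}(\curv/(x\cdot\nu))^{1/2}$, integrating over $\partial K$, and using the auxiliary identity gives
$\Cap(K) = c^{1/2}\int_{\partial K}(\curv/(x\cdot\nu))^{1/2}\,d\haus$.
Eliminating $\Cap(K)$ via the identity of the first step yields
$$c = \frac{(n-2)^2}{n^2\omega_n^2}\left[\int_{\partial K}\Big(\frac{\curv}{x\cdot\nu}\Big)^{1/2}\,d\haus\right]^2,$$
and the estimate \eqref{f:es1} of Theorem \ref{t:reg1}, whose proof uses only H\"older's inequality, the $2$-affine isoperimetric inequality \eqref{f:paffine}, and $\int_{\partial K}\curv = n\omega_n$, hence applies verbatim, gives $c \leq (n-2)^2(|K|/\omega_n)^{(n-2)/n}$. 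The upper and lower bounds coincide, so equality holds in the isocapacitary inequality, and $K$ is a ball.

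The main obstacle is the auxiliary identity $\int_{\partial K}|\nabla u|\,d\haus = \Cap(K)$, which is precisely what allows the first method to run in the capacity setting; once it is at hand, the proof proceeds in close analogy to Theorem \ref{t:reg1}, with the noteworthy reversal that the direct integration now produces the \emph{lower} bound (because $\Cap$ is minimized, not maximized, by balls of given volume) while the first method produces the upper bound — mirroring what happens in Theorem \ref{t:reg2} for the first Dirichlet eigenvalue. The assumption that $K$ has centroid at the origin enters, as in Theorems \ref{t:reg1}--\ref{t:reg2}, only through the Blaschke--Santal\'o inequality used within the estimate \eqref{f:es1}.
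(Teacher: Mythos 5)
Your proposal is correct and follows essentially the same route as the paper: the same lower bound (integrating the overdetermined condition and using \eqref{f:pohoC}, \eqref{f:intk} and the isocapacitary inequality \eqref{f:isocap}) and the same upper bound (via the identity $\Cap(K)=\int_{\partial K}|\nabla u|\,d\haus$, which the paper simply cites from Gilbarg--Trudinger while you derive it by integration by parts and a Pohozaev-type identity, followed by the estimate \eqref{f:es1}). The only inaccuracy is in your closing remark: the centroid hypothesis enters \eqref{f:es1} through the $2$-affine isoperimetric inequality \eqref{f:paffine}, not through Blaschke--Santal\'o, which in the paper is used only in the second method of Theorems \ref{t:reg1} and \ref{t:reg2}.
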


Also the proof of Theorem \ref{t:reg3} follows the same strategy of Theorem \ref{t:reg1}.  
It is based on the following inequalities:
\begin{itemize} 
\item[(i)] The isoperimetric inequality for capacity \cite{posz}
\[
\frac{\Cap(K)}{|K|^{\frac{n-2}{n}}}
\geq
\frac{\Cap(B)}{|B|^{\frac{n-2}{n}}}\,,
\]
with equality if and only if $K$ is a ball.
More explicitly, since $\Cap (B_1) = n ( n-2) \omega _n$, 
\begin{equation}
\label{f:isocap}
\Cap(K) \geq  (n-2)^2 \omega_n^{\frac{2-n}{n}} |K|^\frac{n-2}{n}  \,,
\end{equation}
with equality if and only if $K$ is a ball.

\item[(ii)] The isoperimetric inequality \eqref{f:paffine} for the $p$-affine surface area in the case $p=2$. \end{itemize}

\bigskip
{\it Proof of Theorem \ref{t:reg3}}.
A lower bound for $c$ is obtained by arguing as done to obtain an upper bound in the proof of Theorem \ref{t:reg1}. Specifically, 
by   \eqref{f:pohoC} and  \eqref{f:intk}, we obtain 
 \begin{equation}
\label{f:c1cap}
c = \frac{n-2}{n\, \omega_n}\, \Cap(K).
\end{equation}
Then, by using the
isoperimetric inequality \eqref{f:isocap} for capacity, we get
\begin{equation}
\label{f:belowcap}
c \geq  (n-2)^2 \omega _n ^ { \frac{2-n}{n} }  |K| ^ { \frac{n-2}{n} }    \,,
\end{equation}
with equality if and only if $K$ is a ball.

An upper bound for $c$ is obtained by arguing as done to obtain a lower bound in the proof of Theorem \ref{t:reg1}, first method. 
We integrate  
over $\partial K$ the
overdetermined condition, after rewriting it as  
\[
|\nabla u| = c^{1/2} \, \left(\frac{\curv}{x\cdot \nu}\right)^{1/2}\,.
\]
Using the equalities $\Cap (K)= \int_{\partial K} |\nabla u|$  (see \cite[p.27]{GT}) and \eqref{f:c1cap}, we obtain 
\[
\Cap (K)  =  \frac{n\, \omega_n}{n-2}  c = c^{1/2} \int_{\partial K} \left(\frac{\curv}{x\cdot \nu}\right)^{1/2}
\, d\haus\,,
\]
i.e.
\begin{equation}
\label{f:c2cap}
c =  \Big ( \frac{n-2}{n\omega _n}   \Big  ) ^ 2 {\left[ \int_{\partial K} \left(\frac{\curv}{x\cdot \nu}\right)^{1/2}\, d\haus\right]^2}
 \,.
\end{equation}

For the integral appearing at the right hand side of the above inequality, 
the estimate \eqref{f:es1} 
found in the proof of Theorem \ref{t:reg1} (through the use of H\"older inequality and the  
$2$-affine isoperimetric inequality)  holds. Thus we get 

\begin{equation}\label{f:abovecap} 
c \leq  \Big ( \frac{n-2}{n\omega _n}   \Big  )  ^2  \Big [  n\, \omega_n^{\frac{n+2}{2n}}\, |K|^{\frac{n-2}{2n}} \Big ] ^ 2
= (n-2)^2 \omega _n ^ { \frac{2-n}{n} }  |K| ^ { \frac{n-2}{n} }  
\,.\end{equation}

Comparing \eqref{f:belowcap}  and \eqref{f:abovecap}, 
we  see that our lower and upper bounds match each other, implying in particular that  \eqref{f:belowcap} must hold with equality sign, and hence that 
$K$ is a ball.
\qed

\section{Rigidity results for arbitrary convex bodies}\label{sec:cone}

In this section we drop any smoothness assumption and we deal with arbitrary centred convex bodies having some cone variational measure equal to that of a ball.

\begin{theorem}\label{t:nonreg1} 
Let $K \in \mathcal K ^n$ have its centroid at the origin, and let $\tau _K$ be its cone torsion measure according to Definition \ref{def:cone}. Assume that, for some positive constant $c$, 
 \begin{equation}\label{f:equtau} \tau _K   = c \, \mathcal H ^ { n-1} \res \mathbb S ^ { n-1} 
 \text{ as measures on } \mathbb S ^ {n-1}\,.
\end{equation}
Then $K$ is a ball.

\end{theorem}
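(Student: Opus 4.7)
My plan is to adapt the \emph{second method} used in the proof of Theorem \ref{t:reg1} to the non-smooth setting, by replacing each pointwise identity on $\partial K$ with an integral identity on $\mathbb S^{n-1}$, obtained by testing the measure equality $\tau_K=c\,\mathcal H^{n-1}\res\mathbb S^{n-1}$ against appropriate functions. The goal is to produce matching upper and lower bounds for $c$, so that equality in Saint-Venant's inequality forces $K$ to be a ball.

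For the upper bound, I take total masses in the hypothesis and apply \eqref{f:poho}, obtaining $cn\omega_n = |\tau_K| = (n+2)\tor(K)$. Combined with the Saint-Venant inequality \eqref{f:isotor}, valid with equality characterizing balls, this gives $c\le \frac{1}{n^{2}}(|K|/\omega_n)^{(n+2)/n}$.

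For the lower bound, I first derive the key integral identity. Since the origin lies in the interior of $K$, the function $1/h_K$ is continuous and strictly positive on $\mathbb S^{n-1}$, hence admissible as a test function. Pairing the hypothesis with $1/h_K$, and using $\tau_K=h_K\mu_K$ together with the push-forward definition of $\mu_K$, I obtain
$$\int_{\partial K}|\nabla u_K|^{2}\,d\mathcal H^{n-1} = |\mu_K| = c\int_{\mathbb S^{n-1}}\frac{d\mathcal H^{n-1}}{h_K}.$$
I then plug this, together with $\tor(K)=cn\omega_n/(n+2)$, into the sharp isoperimetric-type inequality \eqref{f:BFL1}, and raise to the power $n+2$. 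This is precisely the manipulation that led to \eqref{f:uc1bis} in the smooth case; the remaining spherical integral is controlled exactly as in \eqref{f:uc21}, via H\"older's inequality with conjugate exponents $n$ and $n/(n-1)$, the standard representation $\int_{\mathbb S^{n-1}}h_K^{-n}\,d\mathcal H^{n-1}=n|K^o|$, and the Blaschke-Santal\'o inequality \eqref{f:BS} (applicable since $K$ is centred). Using the explicit value $\Lambda(B)^{n+2}=n/((n+2)^{n+1}\omega_n)$ recalled in Theorem \ref{t:reg1}, the same arithmetic yields $c\ge \frac{1}{n^{2}}(|K|/\omega_n)^{(n+2)/n}$, which matches the upper bound. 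Equality in \eqref{f:isotor} then forces $K$ to be a ball.

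The main obstacle is the complete absence of boundary regularity of $K$. Dahlberg's result still makes \eqref{f:poho} meaningful; the key identity above is a purely measure-theoretic consequence of the push-forward definition of $\mu_K$; and the Blaschke-Santal\'o and Saint-Venant inequalities (with equality cases) are available in the full class of convex bodies. The delicate point is that the sharp inequality \eqref{f:BFL1} of \cite{BFL12} must be justified for arbitrary convex bodies (most likely by approximation, exploiting the weak continuity of the torsion functional and of the measure $\mu_K$ under Hausdorff convergence, cf.\ Remark \ref{r:weak}), and the regularity theory developed by Hug in \cite{Hugc, Hug1, Hug2} enters to ensure that the measure-theoretic hypothesis $\tau_K=c\,\mathcal H^{n-1}\res\mathbb S^{n-1}$ confers enough structure on the surface area measure of $K$ for the whole chain of manipulations to be rigorously justified.
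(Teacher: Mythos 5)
Your proposal is correct, but it takes a genuinely different route from the paper at the key point, namely how the lack of smoothness is handled. The paper first upgrades the hypothesis \eqref{f:equtau} to structural regularity: it shows (via Hopf's lemma and the positivity of $x\cdot\nu$) that $S_K$ must be absolutely continuous with strictly positive curvature function, then invokes Hug's results (Proposition \ref{p:Hug}) to convert the measure identity into the pointwise a.e.\ identity $|\nabla u|=c^{1/2}(\curv/(x\cdot\nu))^{1/2}$ on $\partial K$, which yields both \eqref{f:c1} and \eqref{f:c2}; the lower bound is then run as in the \emph{first} method of Theorem \ref{t:reg1}, using H\"older and the $2$-affine isoperimetric inequality in Ludwig's version for bodies admitting a curvature function. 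You instead bypass Hug's machinery entirely: testing $\tau_K=h_K\mu_K=c\,\haus\res\mathbb S^{n-1}$ against the continuous function $1/h_K$ gives directly $\int_{\partial K}|\nabla u_K|^2\,d\haus=c\int_{\mathbb S^{n-1}}h_K^{-1}\,d\haus$, which is exactly the quantity needed to run the \emph{second} method (inequality \eqref{f:BFL1}, H\"older on the sphere, $\int_{\mathbb S^{n-1}}h_K^{-n}=n|K^o|$, Blaschke--Santal\'o), and the arithmetic indeed reproduces the matching bound $\frac1{n^2}(|K|/\omega_n)^{(n+2)/n}$, so equality in Saint-Venant closes the argument. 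What your route buys is economy: no pointwise identity, no absolute continuity claim, no Alexandroff/Hug regularity; its only delicate ingredient is the validity of \eqref{f:BFL1} for arbitrary convex bodies, which the paper itself asserts (and which BFL12 provides in the convex, non-smooth setting), so your approximation fallback is a reasonable extra safeguard rather than a gap. What the paper's route buys is the structural information (positive curvature function, pointwise condition) that makes the argument uniform across the three functionals --- in particular for Newtonian capacity, where no analogue of \eqref{f:BFL1} is used and the first method is the only one available --- whereas your shortcut is specific to torsion (and would adapt to the first eigenvalue via \eqref{f:BFL}), which is all that is required for the statement at hand.
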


The proof of Theorem \ref{t:nonreg1} requires some preliminaries about convex bodies which have absolutely continuous surface area measure (or equivalently which admit a curvature function); we gather them in Proposition  \ref{p:Hug}, 
relying on some results proved in \cite{Hugc,Hug1,Hug2}.   

We recall that,  if a convex body $K$ has absolutely continuous surface area measure, by definition there exists a unique non-negative function $f_K \in L^1(\mathbb S^{n-1},   \haus \res \mathbb S^{n-1}  )$, called  {\it curvature function} of $K$,  such that its surface area measure satisfies $S_K = f _K \haus \res \mathbb S^{n-1} $. Equivalently, we have 
\[
V _ 1(K, L) := \lim_{t \to 0^+ } \frac{V (K + t  L)  - V (K)}{t}  = \frac{1}{n} \int _{\mathbb S ^ {n-1} } h _ L (\xi) f_K (\xi) d \mathcal H ^ {n-1}  (\xi) \qquad \forall L \in \mathcal K ^n \,. 
\] 
 
Below we follow the usual convention that 
$\nu\colon\partial K \to \mathbb S^{n-1}$ is a possibly multivalued map associating
to every $x\in\partial K $ the unit vectors of the normal cone to $\partial K $
at $x$. 

Moreover, using the notations in the cited papers of Hug, we set 
\begin{itemize}
\item[--] $\mathcal{M}(K)$:= the set of points  $x\in \partial K$  such that $\partial K$ is second order differentiable at $x$
(so that $\nu(x)$ is a singleton and 
$\curv(x)$ is well-defined as the product of the principal curvatures);
\medskip

\item[--] $(\partial K)_+$:= the set of points $x\in\partial K$ such that there exists an
internal tangent ball touching $\partial K$ at $x$ (so that $\curv(x)$ is finite);

\medskip
\item[--] $\exp^* K$:= the set of  points $x\in\partial K$ such that there
exists a closed ball $B$ containing $K$ with $x\in\partial B$ (so that $\curv(x) > 0$).
\end{itemize}

\begin{proposition}\label{p:Hug}
Assume that $K \in \mathcal K ^n$ has absolutely continuous surface area measure, 
and let $f_K$ be its curvature function.  Then:
\begin{itemize} 
\item[(a)]  Denoting by $r_1(\xi ), \ldots, r_{n-1}(\xi )$ the generalized radii of curvature
of $\partial K$ at $\nu^{-1}(\xi )$, namely the eigenvalues of $D^2 h_K(\xi ) | _{\xi ^\perp}$, we have
\begin{equation}
\label{f:fr}
f_K(\xi ) = r(\xi )  :=  \prod_{j=1}^{n-1} r_j(\xi )
\qquad
\text{for $\haus$-a.e.\ $\xi \in  \mathbb S^{n-1}$}.
\end{equation}
\item[(b)] Setting 
$$
\reg[K] :=
\mathcal{M}(K) \cap (\partial K)_+ \cap \exp^* K\,, 
$$
 we have that
$ \haus(\partial K \setminus\reg[K]) = 0$,
 and $\nu$ is a bijection from $\reg[K]$ to $\nu(\reg[K])$.

\end{itemize}
\smallskip 

If, in addition, $f_K(\xi) > 0$ for $\haus$-a.e.\ $\xi \in \mathbb S^{n-1}$,
then:

\smallskip 
\begin{itemize}

\item[(c)] $\nu(\reg[K])$ has full measure in $\mathbb{S}^{n-1}$,
i.e.\ $\haus(\mathbb S^{n-1} \setminus \nu(\reg[K])) = 0$.

\smallskip 
\item[(d)] After possibly removing a $\haus$-null set from $\reg[K]$,  for every $\xi \in \nu(\reg[K])$, $h_K$ is second order differentiable
at $\xi $, $x := \nabla h_K(\xi ) \in \reg[K]$ and $\curv(x) \, r(\xi ) = 1$.

\smallskip
\item[(e)]
For every nonnegative function $\psi \in L^1(\partial K,  \haus \res \partial K  )$ it holds that
\[
\int_{\partial K} \psi \, d\haus
=
\int_{\mathbb S^{n-1}} \frac{\psi (\nu^{-1}(\xi ))}{\curv(\nu^{-1}(\xi ))}\, d\haus(\xi )
\,.
\]
\end{itemize}
\end{proposition}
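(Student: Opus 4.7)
The plan is to assemble the five claims from the fine regularity theory of general convex bodies, drawing on the cited papers of Hug. The logical order I would follow is (a)--(b) for the general assertions, then (d)--(c)--(e) under the added positivity assumption on $f_K$; each item will be reduced to one or two ingredients already available in the literature.

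For (a), I would combine Alexandrov's second-order differentiability theorem for the convex function $h_K$ with the mixed volume representation $V_1(K,L)=\frac{1}{n}\int h_L\,dS_K$. At a point $\xi\in\mathbb S^{n-1}$ where $h_K$ is twice differentiable, the Radon-Nikodym density of $S_K$ with respect to $\mathcal H^{n-1}\!\res\mathbb S^{n-1}$ is identified with $\det(D^2 h_K(\xi)|_{\xi^\perp})$, which is exactly $r(\xi)$; since Alexandrov's theorem gives such second differentiability $\mathcal H^{n-1}$-a.e.\ on $\mathbb S^{n-1}$, the identity $f_K=r$ holds almost everywhere. For (b), I would invoke three classical facts, each producing a set of full $\mathcal H^{n-1}$-measure in $\partial K$: Alexandrov's theorem applied to $\partial K$ yields $\mathcal H^{n-1}(\partial K\setminus\mathcal M(K))=0$; the a.e.\ finiteness of the generalized Gauss curvature gives $\mathcal H^{n-1}(\partial K\setminus(\partial K)_+)=0$; and Hug's result in \cite{Hug1} gives $\mathcal H^{n-1}(\partial K\setminus\exp^* K)=0$. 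The intersection is $\reg[K]$, so it has full measure. Injectivity of $\nu$ on $\reg[K]$ then follows from the exposed condition: if two distinct points $x_1,x_2\in\reg[K]$ shared the same outer normal, they would lie on a common supporting hyperplane, but each being the unique contact point of some enclosing ball would force $x_1=x_2$.

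Under the positivity assumption $f_K>0$ a.e., I would establish (d) first, by invoking the duality between the second fundamental form of $\partial K$ at $x\in\reg[K]$ and the restricted Hessian $D^2 h_K(\xi)|_{\xi^\perp}$ at $\xi=\nu(x)$, as developed in \cite{Hug2,Hugc}: on a full-measure subset of $\nu(\reg[K])$ the two tensors are matrix inverses, giving both the identification $x=\nabla h_K(\xi)\in\reg[K]$ and the reciprocity $\curv(x)\,r(\xi)=1$. Statement (c) is then a direct consequence of (a) and (d): the positivity $r(\xi)>0$ a.e.\ together with the bijective correspondence $x\leftrightarrow\xi$ established on a set of full measure forces $\mathcal H^{n-1}(\mathbb S^{n-1}\setminus\nu(\reg[K]))=0$. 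Finally, (e) follows from the area formula applied to the Gauss map $\nu\colon\reg[K]\to\mathbb S^{n-1}$: on $\mathcal M(K)\cap(\partial K)_+$ this map is (generalized) differentiable with Jacobian equal to $\curv>0$, and the bijectivity modulo null sets guaranteed by (b) and (c) converts the area formula into the stated change of variables.

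The main obstacle will be (d), whose duality statement between the Hessian of $h_K$ on $\mathbb S^{n-1}$ and the second fundamental form of $\partial K$ is subtle for non-smooth convex bodies, because both tensors are only defined almost everywhere and their matching requires a careful comparison on pairs of null sets. However, this is precisely the content of the cited works by Hug, so the task will ultimately reduce to verifying that the formulations there cover the version needed here.
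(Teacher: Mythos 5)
Your plan for (a), (b) and (e) assembles essentially the same ingredients as the paper, but your derivation of (c) has a genuine gap. You propose to prove (d) first and then obtain (c) as a ``direct consequence of (a) and (d)''; as stated this is circular, because (d) only asserts something at points $\xi$ that already lie in $\nu(\reg[K])$, whereas (c) is the claim that the complement $\mathbb S^{n-1}\setminus\nu(\reg[K])$ is $\haus$-null, which (d) cannot reach. Nor does the ``bijective correspondence on a set of full measure'' help: the full-measure statement of (b) is with respect to $\haus\res\partial K$, and the spherical image under $\nu$ of an $\haus$-null subset of $\partial K$ can have positive, even full, measure in $\mathbb S^{n-1}$ (for a polytope, the Gauss image of the null set of non-facet points covers the whole sphere up to finitely many directions). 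Passing from ``full measure in $\partial K$'' to ``full measure in $\mathbb S^{n-1}$'' is precisely the nontrivial content of (c), and it is exactly where the hypothesis $f_K>0$ must be used. The paper obtains it from Hug's absolute continuity theorem for curvature measures (\cite[Theorem 2.3]{Hug2}) applied to $C_K(E)=\haus(\nu(E))$: the positivity of $f_K$ forces $C_K\ll\haus\res\partial K$, and then $\haus(\partial K\setminus\reg[K])=0$ yields $\haus(\nu(\partial K\setminus\reg[K]))=0$, hence (c); note also that the paper then deduces (d) \emph{from} (c) together with Alexandrov's theorem on the sphere. Your proposal contains no substitute for this ingredient: to make your order of proof work you would have to restate (d) as a claim about a.e.\ $\xi\in\mathbb S^{n-1}$ (namely that at a.e.\ Alexandrov point $\xi$ of $h_K$ with $r(\xi)=f_K(\xi)>0$ the point $\nabla h_K(\xi)$ belongs to $\mathcal M(K)\cap(\partial K)_+\cap\exp^* K$, with the reciprocity of curvatures), and prove it by a pointwise duality argument; that argument is not in your sketch, and carrying it out amounts to reproving the absolute-continuity statement you skipped.

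Two smaller points. In (b), the full measure of $(\partial K)_+$ does not follow from ``a.e.\ finiteness of the generalized Gauss curvature''; the correct reference is McMullen's theorem on interior tangent balls \cite{McMullen} (your injectivity argument, on the other hand, is essentially right once one observes that for $x_1\in\mathcal M(K)$ the enclosing ball from $\exp^*K$ must touch with normal $\nu(x_1)$, even though its contact point need not be unique as you phrased it). In (e), the area formula cannot be applied directly to $\nu$ on all of $\mathcal M(K)\cap(\partial K)_+$: the enabling fact is that $\nu$ restricted to $(\partial K)_r$ is Lipschitz with approximate Jacobian $\curv$ (\cite[Lemma 2.3]{Hugc}), after which one exhausts $(\partial K)_+=\bigcup_n(\partial K)_{1/n}$ and concludes by monotone convergence together with (c); this omission is repairable, but it is the step that makes the change of variables legitimate.
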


\begin{proof} 
The equality \eqref{f:fr} is proved in \cite[formula (2.8)]{Hug1}.

Statement (b) is consequence of the 
facts that $\haus(\partial K \setminus \mathcal{M}) = 0$,
$\haus(\partial K \setminus (\partial K)_+) = 0$ 
and 
$\haus(\partial K \setminus \exp^* K) = 0$,
following respectively from 
Alexandroff theorem \cite{Alex}, \cite{McMullen},
and \cite[Theorem~3.7(c)]{Hug2}.

Assume now that $f_K > 0$ $\haus$-a.e.\ on $\mathbb S^{n-1}$. 
Recall that the
curvature measure of $K$ is the measure $C_K$, supported on $\partial K$, defined by
\[
C_K (E) := \haus(\nu(E))
\qquad
\text{for every Borel set $E\subseteq \partial K$}.
\] 
By \eqref{f:fr} and Theorem~2.3 in \cite{Hug2},
it follows that  $C_K$ is absolutely continuous with respect to  $\haus\res\partial K$.
Since, by  statement (b), the set $\partial K \setminus \reg[K]$ is  $\haus$-negligible,
we conclude that also statement (c) holds, namely $\mathbb S^{n-1} \setminus \nu(\reg[K])$ is
$\haus$-negligible.
To prove statement  (d), we consider the subset of $\mathbb S ^ {n-1} $ given by   
\[
\Sigma := \big \{
\xi \in \nu(\reg[K])\colon \text{$h_K$ is second order differentiable at $\xi $}
\big \}\,,
\]
and we replace
$\reg[K]$ with the possibly smaller set $\nu^{-1}(\Sigma)$. 
Then statement (d) follows from (c) and Alexandroff theorem.

Finally, let us prove (e). Following \cite{Hugc}, for every $r>0$ let $(\partial K)_r$
be the set of all points of $x\in\partial K$ such that there exists an internal
tangent ball of radius $r$ touching $\partial K$ at $x$.
From Lemma~2.3 in \cite{Hugc}, the map $\nu\res (\partial K)_r$ is
Lipschitz continuous, and its approximate $(n-1)$-dimensional Jacobian is
\[
\text{ap}\, J_{n-1} \nu\res (\partial K)_r (x)
= \curv(x),
\qquad
\text{for $\haus$-a.e.\ $x\in (\partial K)_r$.}
\]
Recalling that $\curv(x) > 0$ for $\haus$-a.e.\ $x\in\partial K$,
for every positive $n\in \N$
we can apply Federer's coarea formula (see \cite[Theorem~3.2.22]{Fed})
to the non-negative function 
\[
h_n(x) := \frac{\psi (x)}{\curv(x)}\, \chi_{(\partial K)_{1/n}}(x),
\] 
obtaining
\[
\int_{\partial K} h _n\, \curv\, d\haus
=
\int_{(\partial K)_{1/n}} h_n\, \curv\, d\haus
= 
\int_{\nu\left((\partial K)_{1/n}\right)} \frac{ \psi \circ\nu^{-1}}{\curv\circ\nu^{-1}}\, d\haus\,.
\]
Since $(\partial K)_+ = \bigcup_{n} (\partial K)_{1/n}$,
the change of variable formula (e) follows by using (c), and applying Lebesgue monotone
convergence theorem.
\end{proof}

\bigskip

We are now in a position to give the 

\bigskip

{\it Proof of Theorem \ref{t:nonreg1}.}  
The idea is to follow the same proof line of Theorem \ref{t:reg1}. However, this cannot be done directly,  
since we do not have any longer a pointwise identity holding along the boundary. So,  
we have to prove first of all that the constant $c$ appearing in \eqref{f:equtau} can be still expressed by the two different formulas \eqref{f:c1} and \eqref{f:c2}. 
 To obtain  formula \eqref{f:c1}, it is enough to observe that  the two measures in the overdetermined conditions must have the same total variation: using \eqref{f:poho}, we obtain
\[
|\tau _K| =  (n+2) \tor (K),
\]  
and hence
$$c =  \frac{ |\tau _K|  }{\mathcal H ^ {n-1} ( \mathbb S ^ {n-1}) } =  \frac{n+2}{n\, \omega_n}\, \tor(K) \,.$$ 

To obtain also formula \eqref{f:c2}, we need first to prove the following
 
 \smallskip
Claim:  {\it the equality \eqref{f:equtau} implies that $K$ has absolutely continuous surface area measure, and in addition its curvature function is strictly positive.} 
\smallskip 

This amount to show that, if the equality \eqref{f:equtau} holds,  for a Borel set $E \subset \mathbb S ^ {n-1}$, the following implications hold:
\begin{equation}\label{f:implication1} 
\mathcal H ^ {n-1} \res \mathbb S ^ {n-1}  ( E ) = 0 \quad \Rightarrow \ \surf (E) = 0\,;
\end{equation}  
\begin{equation}\label{f:implication2} 
\mathcal H ^ {n-1} \res \mathbb S ^ {n-1}  ( E ) > 0 \quad \Rightarrow \ \surf (E) > 0\,.
\end{equation}  
Indeed, \eqref{f:implication1} implies that  $K$ admits a curvature function $f_K$; then,  since
 $\surf = f_K\, \haus\res \mathbb S^{n-1}$, \eqref{f:implication2} implies that $f_K$ is strictly positive $\haus$-a.e.\ on $\mathbb S^{n-1}$.

Let $E  \subset \mathbb S ^ {n-1}$ be a Borel set with $\mathcal H ^ {n-1} \res \mathbb S ^ {n-1}  ( E ) = 0$. 
By \eqref{f:equtau}, we have $\tau _K(E) = 0$. 
Recalling that, by definition,  we have 
\begin{equation}
\label{f:tau}
\tau _K (E) = \int _{\nu ^ { -1} ( E)} |\nabla u| ^ 2  x \cdot \nu \, d \mathcal H ^ {n-1}\,,
\end{equation}
 and
 \[
\surf ( E) = \mathcal H ^ {n-1} (\partial K \cap \nu ^ { -1} (E) ) \,,
\] 
the  implication \eqref{f:implication1} follows from the fact that 
$$ |\nabla u| ^ 2  x \cdot \nu  > 0 \qquad \mathcal H ^ {n-1}\text{-a.e. on } \partial K \,.$$ 
Indeed, the term $x \cdot \nu$ remains strictly positive since by assumption $K$ contains the origin in its interior, while the term $|\nabla u| ^2$ remains strictly positive 
by Hopf boundary point lemma (since  $K$ admits an inner touching ball at  $\mathcal H ^ {n-1}$-a.e. $x \in \partial K$,
see e.g.\ \cite{McMullen}).

Let $E  \subset \mathbb S ^ {n-1}$ be a Borel set with $\mathcal H ^ {n-1} \res \mathbb S ^ {n-1}  ( E ) > 0$. 
From \eqref{f:equtau}, 
it follows that $\tau_K(E) = c\, \haus(E) > 0$. In turn, by \eqref{f:tau},
this implies that $\haus(\nu^{-1}(E)) > 0$, or equivalently that $S_K ( E) >0$, proving \eqref{f:implication2}.

\medskip
Since we have just proved that $K$ admits a positive curvature function $f_K$, recalling also the definition of cone torsion measure $\tau _K$, 
the equality \eqref{f:equtau} can be reformulated as   
\[
h _K \nu _{\sharp } (|\nabla u | ^ 2 \mathcal H ^ { n-1} \res \partial K) = \frac{c}{f _K } \nu _\sharp ( \mathcal H ^ { n-1} \res \partial K ) \,.
\] 
 
Namely, for every Borel set  $E \subset \mathbb S ^ {n-1}$, it holds that
\begin{equation}
\label{f:meas0}
\int _{\nu ^ { -1} ( E)} x \cdot \nu |\nabla u | ^ 2  \, d \mathcal H ^ { n-1} =   c\, 
\int _{\nu ^ { -1} ( E)} \frac{1}{f _ K \circ \nu} \, d\haus\,.
\end{equation}
We now invoke Proposition \ref{p:Hug}. By statements (a) and (d), we can rewrite the right hand side of the above equality as
$$\int _{\nu ^ { -1} ( E)} \frac{1}{f _ K \circ \nu} \, d\haus
=  \int _{\nu ^ { -1} ( E)} \curv\, d\haus\,,
$$
so that \eqref{f:meas0} turns into 
\begin{equation}
\label{f:meas00}
\int _{\nu ^ { -1} ( E)} x \cdot \nu |\nabla u | ^ 2  \, d \mathcal H ^ { n-1} =   c\, 
\int _{\nu ^ { -1} ( E)} \curv\, d\haus\,.
\end{equation}

By Proposition \ref{p:Hug}, statements (b) and (c), we know that 
$\nu\res\reg$ is a bijection from $\reg$ to $\nu(\reg)$,
with $\haus(\partial K \setminus\reg) = 0$ and
$\haus( \mathbb S^{n-1}\setminus \nu(\reg)) = 0$. 
In view of this fact, the integral equality \eqref{f:meas00} yields the pointwise equality 
\[
|\nabla u| = c^{1/2} \, \left( \frac {\curv(x)}{x\cdot \nu(x)}\right)^{1/2} \qquad  \mathcal H ^ {n-1}\text{-a.e. on } \partial K \,.
\]
Integrating over $\partial K$, we see that \eqref{f:c2} holds.

Having established the validity of \eqref{f:c1} and \eqref{f:c2}, the remaining of the proof can proceed in the analogous way as in Theorem \ref{t:reg1}. 
 Indeed, the inequalities used  in the proof of Theorem~\ref{t:reg1} do not need any smoothness assumption. 
In particular, for the 
 $2$-affine isoperimetric inequality  for convex bodies which admit a curvature function, we refer to \cite[Theorem~3]{ludwig} (see also \cite[Remark~p.~296]{Hugc}). 
\qed

\bigskip

As well as the inequalities used in the proof of Theorem \ref{t:reg1},  also the inequalities used in the proof of Theorems \ref{t:reg2} and \ref{t:reg3} do not need any smoothness assumption. Thus we obtain the following results for convex bodies with the same cone eigenvalue measure or the same cone capacity measure as a ball. We omit their proofs since they are analogous to the one of Theorem \ref{t:nonreg1} detailed above. 

\begin{theorem}
Let $K \in \mathcal K ^n$ have its centroid at the origin, and let $\sigma _K$ be its cone eigenvalue measure according to Definition \ref{def:cone}. Assume that, for some positive constant $c$, 
\begin{equation*}
\sigma _K   = c \, \mathcal H ^ { n-1} \res \mathbb S ^ { n-1} 
 \text{ as measures on } \mathbb S ^ {n-1}\,.
\end{equation*}
Then $K$ is a ball.
\end{theorem}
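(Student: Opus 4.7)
The strategy is to mimic the proof of Theorem \ref{t:nonreg1}, translating the arguments from Theorem \ref{t:reg2} to the non-smooth setting via Proposition \ref{p:Hug}. The goal is to extract from the measure identity $\sigma_K = c\,\mathcal{H}^{n-1}\res \mathbb{S}^{n-1}$ two matching expressions for the constant $c$, namely the analogues of \eqref{f:cc1} and of the formula obtained by integrating the pointwise overdetermined condition in the form $|\nabla u|^2 = c\, G/(x\cdot\nu)$.

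First, I would read off the value of $c$ from the total variation. Since $|\sigma_K| = 2\lambda_1(K)$ by \eqref{f:pohoL} and $\mathcal{H}^{n-1}(\mathbb{S}^{n-1}) = n\omega_n$, the measure identity forces
\[
c = \frac{2}{n\omega_n}\,\lambda_1(K),
\]
which is exactly \eqref{f:cc1}. This already yields, via Faber--Krahn \eqref{f:FK}, the lower bound \eqref{f:lc} for $c$.

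The delicate step is to produce the upper bound \eqref{f:uc}, for which I need a pointwise reformulation of the overdetermined condition. I would repeat verbatim the \emph{Claim} in the proof of Theorem \ref{t:nonreg1}: the identity $\sigma_K = c\,\mathcal{H}^{n-1}\res\mathbb{S}^{n-1}$ implies that $K$ has absolutely continuous surface area measure with strictly positive curvature function $f_K$. The argument is the same, with $\sigma_K$ in place of $\tau_K$: for any Borel $E\subset\mathbb{S}^{n-1}$,
\[
\sigma_K(E) = \int_{\nu^{-1}(E)} |\nabla u|^2\, x\cdot\nu\, d\mathcal{H}^{n-1},
\]
and the integrand is strictly positive $\mathcal{H}^{n-1}$-a.e.\ on $\partial K$ (the factor $x\cdot\nu$ because the origin lies in $\mathrm{int}\,K$; the factor $|\nabla u|^2$ by Hopf's lemma, applicable at $\mathcal{H}^{n-1}$-a.e.\ boundary point since an interior tangent ball exists there by \cite{McMullen}). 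This gives both implications \eqref{f:implication1}--\eqref{f:implication2} and hence the existence and positivity of $f_K$. Note Hopf's lemma is equally available for the first eigenfunction, which is positive in $\mathrm{int}\,K$ and vanishes on $\partial K$, so no change is needed here.

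Once $f_K > 0$ a.e., Proposition \ref{p:Hug} applies: statements (a) and (d) give $1/(f_K\circ\nu) = G$ on $\reg$, and statements (b)--(c) let me rewrite the measure equality as the integral identity
\[
\int_{\nu^{-1}(E)} |\nabla u|^2\, x\cdot\nu\, d\mathcal{H}^{n-1} = c\int_{\nu^{-1}(E)} G\, d\mathcal{H}^{n-1}
\]
for every Borel $E\subset \mathbb{S}^{n-1}$, whence the pointwise identity $|\nabla u|^2 = c\, G/(x\cdot\nu)$ holds $\mathcal{H}^{n-1}$-a.e.\ on $\partial K$. Integrating this relation over $\partial K$ and using \eqref{f:cc1} reproduces exactly the step leading to inequality \eqref{f:BFL} in the proof of Theorem \ref{t:reg2}; combining it with the H\"older plus Blaschke--Santal\'o bound \eqref{f:uc21} (valid without any smoothness, only using that $K$ has centroid at the origin) yields the upper bound \eqref{f:uc}. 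Finally, matching the lower bound \eqref{f:lc} with the upper bound \eqref{f:uc} via the Bessel-function identity \eqref{f:mir} forces equality in Faber--Krahn, so $K$ is a ball. The main obstacle, as in Theorem \ref{t:nonreg1}, is the passage from the measure equality to a pointwise identity; everything else is a cosmetic rewriting of the Theorem \ref{t:reg2} argument, since the inequalities \eqref{f:FK}, \eqref{f:BFL}, and \eqref{f:BS} all hold without regularity assumptions.
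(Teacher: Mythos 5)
Your proposal is correct and coincides with the proof the paper intends: the paper omits this proof precisely because it is the argument you describe, namely the Claim and Proposition \ref{p:Hug} machinery of Theorem \ref{t:nonreg1} (with Hopf's lemma applied to the positive first eigenfunction) used to upgrade the measure identity to the pointwise condition, followed by the inequality scheme of Theorem \ref{t:reg2} — \eqref{f:cc1}, Faber--Krahn for the lower bound, \eqref{f:BFL} with \eqref{f:uc21} for the upper bound, and the matching via \eqref{f:mir}. No gaps.
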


\begin{theorem}
Let $K \in \mathcal K ^n$, $n\geq 3$, have its centroid at the origin, and let $\eta _K$ be its cone capacity measure according to Definition \ref{def:cone}. Assume that, for some positive constant $c$, 
\begin{equation*}
\eta _K   = c \, \mathcal H ^ { n-1} \res \mathbb S ^ { n-1} 
 \text{ as measures on } \mathbb S ^ {n-1}\,.
\end{equation*}
Then $K$ is a ball.
\end{theorem}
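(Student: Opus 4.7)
The plan is to mirror the proof of Theorem~\ref{t:nonreg1}, replacing the torsion inequalities by the capacity counterparts used in Theorem~\ref{t:reg3}. I need to produce two distinct expressions for $c$, show they match only when $K$ is a ball, and establish absolute continuity plus strict positivity of the curvature function along the way.

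First, equating total variations in the hypothesis yields the direct analogue of \eqref{f:c1cap}: by \eqref{f:pohoC} and $\mathcal{H}^{n-1}(\mathbb{S}^{n-1}) = n\omega_n$,
\[
c = \frac{|\eta_K|}{n\omega_n} = \frac{n-2}{n\omega_n}\,\Cap(K).
\]
Next, I extract the analogue of \eqref{f:c2cap}, which requires a pointwise identity along $\partial K$. For this I first establish that $S_K$ is absolutely continuous with $f_K > 0$ $\haus$-a.e., following the argument for $\tau_K$ in Theorem~\ref{t:nonreg1}. Writing
\[
\eta_K(E) = \int_{\nu^{-1}(E)} (x\cdot \nu)\,|\nabla u|^2\, d\mathcal{H}^{n-1},
\]
null and non-null sets of $\eta_K = c\,\mathcal{H}^{n-1}\res \mathbb{S}^{n-1}$ correspond, through $\nu$, to null and non-null sets of $S_K$, provided the integrand is $\haus$-a.e.\ strictly positive on $\partial K$. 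Positivity of $x \cdot \nu$ is immediate since the origin lies in the interior of $K$; positivity of $|\nabla u|$ follows from Hopf's lemma applied to the exterior Dirichlet problem (at every boundary point of a convex body the complement satisfies an exterior sphere condition, since any supporting half-space contains tangent balls of arbitrary radius), combined with Dahlberg's theorem, which ensures $|\nabla u|$ is well-defined $\haus$-a.e.\ on $\partial K$.

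With $f_K > 0$ in hand, Proposition~\ref{p:Hug} provides the bijection $\nu\colon R(K) \to \nu(R(K))$ whose complements are $\haus$-negligible, and the measure identity translates into the pointwise equality
\[
|\nabla u(x)|^2 \, x\cdot \nu(x) = c\,\curv(x) \qquad \text{for } \haus\text{-a.e. } x \in \partial K.
\]
Rewriting this as $|\nabla u| = c^{1/2}(\curv/(x\cdot \nu))^{1/2}$ and integrating on $\partial K$, using the standard identity $\Cap(K) = \int_{\partial K}|\nabla u|\, d\haus$ together with the formula for $c$ already obtained, produces exactly \eqref{f:c2cap}. From this point the argument reproduces Theorem~\ref{t:reg3} verbatim: the capacity isoperimetric inequality \eqref{f:isocap} furnishes a lower bound for $c$, while H\"older's inequality combined with the $2$-affine isoperimetric inequality \eqref{f:paffine} (which holds for any convex body admitting a curvature function, by \cite[Theorem~3]{ludwig}) gives a matching upper bound. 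Equality in \eqref{f:isocap} then forces $K$ to be a ball. The main obstacle is the positivity of $|\nabla u|$ $\haus$-a.e.\ on $\partial K$ for the exterior capacitary potential of a non-smooth convex body; once that is in place, everything else is a transplantation of already-established arguments.
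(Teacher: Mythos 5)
Your proposal is correct and follows exactly the route the paper intends: it transplants the proof of Theorem \ref{t:nonreg1}, replacing the torsion ingredients by the capacity ones of Theorem \ref{t:reg3} (the two expressions for $c$ via \eqref{f:pohoC} and $\Cap(K)=\int_{\partial K}|\nabla u|\,d\haus$, then \eqref{f:isocap} against \eqref{f:paffine}). You also correctly settle the one point needing a new remark, the $\haus$-a.e.\ positivity of $|\nabla u|$ on $\partial K$, via Hopf's lemma for the exterior potential together with the exterior tangent balls available at \emph{every} boundary point of a convex body and Dahlberg's a.e.\ existence of the gradient.
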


 \section{Ultimate shapes of variational flows}\label{sec:flow}  
In this section we  show that, in the smooth centrally symmetric setting, the convex bodies analysed in Section~\ref{sec:overdet}
can be identified with the ultimate shapes of new variational flows, under the basic assumption that they admit a solution. We present a unified treatment, by considering the evolution problem  
\begin{equation}
\label{f:evolutionF}
\begin{cases}
 \displaystyle  \frac{ \partial h}{\partial t} ( t, \xi) =   - \costa F (C( t))  \frac{ \curv (t, \nu_t  ^ { -1} ( \xi ) )}{|\nabla u( t, \nu_t ^ { -1}  (\xi)) | ^ 2}  & \text{ on } (0, T) \times {\mathbb S} ^ { n-1} ,\\
h ( 0, \xi) = h _ 0 ( \xi)  & \text{on} \ {\mathbb S} ^ {n-1},\\
h ( t, \xi) \leq \costb& \text{on} \ (0, T) \times {\mathbb S} ^ {n-1} \,.
\end{cases}
\end{equation}
where $F$ may denote either the  torsional rigidity, or the principal 
Dirichlet Laplacian eigenvalue, or the Newtonian capacity (the latter for $n \geq 3$), 
and $u$ is the corresponding ground state. 

Let us remark that,  in terms of the parametrization of $\partial C (t)$  by its inverse Gauss map 
$X ( t, \cdot) :  \mathbb S ^ {n-1}  \ni \xi \to  \nu _t^ { -1} ( \xi) \in \partial C ( t)$, 
the evolution equation in \eqref{f:evolutionF} can be written  as
$$ \frac{ \partial X}{\partial t} (t, \xi) =  - \frac{\curv (t, X)}{|\nabla u| (t,X) ^ 2}  F (C(t)) \,  \xi  \qquad \text{ on } (0, T) \times \mathbb S ^ { n-1} $$
(indeed, we have $h ( t, \xi) = \nu _t^ { -1} ( \xi) \cdot \xi$, so that $\frac{ \partial h}{\partial t} ( t, \xi)= \frac{ \partial X}{\partial t} ( t,  \nu_t ^ { -1} (\xi)) \cdot \xi $). 

\smallskip
Thus, up to the factor $F (C(t))$ (which is just a rescaling allowing to have global existence in time), 
the unique crucial difference  bewteen \eqref{f:evolutionF} and the classical Gaussian curvature flow  is the presence of the squared modulus of the ground state gradient. 

We denote by $\alpha$ the homogeneity degree of $F$ under domain dilations   (see Section \ref{sec:cone}).
The following statement deals with the cases of torsion and first eigenvalue, see however Remark \ref{r:capacity} for the case of capacity. 

\begin{theorem}\label{t:firey}  Let $F = \tor$ or $F = \lambda _ 1$. 
Assume that, for some $T> 0 $, there exists a unique family of convex bodies $C ( t)$, defined for $t \in [ 0, T)$, 
which are smooth (at least of class $\mathcal C ^3$) and such that their support function $\xi \mapsto h ( t, \xi)$ satisfies \eqref{f:evolutionF}. 
Then, setting
$\gamma = \frac{\costa n \omega_n}{|\alpha|}$,
\begin{itemize}

\item[(i)]  it holds that $F (t) = F _0 \exp ( - \g t)$, where $F _0$ is the energy at $t= 0$;  in particular, the family $C ( t)$ is strictly decreasing by inclusion; 
\smallskip

\item[(ii)]    the flow is defined  for every $t \geq 0$ (i.e., $T = + \infty$), 
and the sets 
 $C ( t)$ are strictly convex bodies; 
\smallskip

\item[(iii)]  the following entropy is decreasing along the flow:
$$\mathcal E ( t):= \int _ { \mathbb S ^ {n-1} } \log h ( t, \xi) \, d \mathcal H ^ {n-1} ( \xi) +  \gamma n \omega _n  t \,;$$

\item[(iv)]  assuming in addition that $C (0)$ is centrally symmetric, we have that  $C (t) $ is centrally symmetric for every $t \geq 0$; 
moreover, if $\{t _n \} \to + \infty$, up to subsequences $ \exp ({\g} t_n) C ( t_n)$ converge in Hausdorff distance to a centrally symmetric convex body, called an {\rm ultimate shape} for problem \eqref{f:evolution}, whose cone torsion  measure or  cone eigenvalue measure  is absolutely continuous with respect to  $\mathcal H ^ {n-1} \res \mathbb S ^ {n-1}$, with constant density. 
\end{itemize}
\end{theorem}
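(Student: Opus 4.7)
The overall strategy is to differentiate the energy $F(C(t))$ and the entropy $\mathcal{E}(t)$ along the flow by means of the Hadamard first variation formula from Section~\ref{sec:prel}, and then to extract an ultimate shape via compactness combined with the equality case of a Cauchy--Schwarz inequality. For parts (i) and (ii), I would start from the Hadamard variation
\[
\frac{d}{dt} F(C(t)) = \mathrm{sign}(\alpha)\int_{\mathbb{S}^{n-1}} \frac{\partial h}{\partial t}\, d\mu_{C(t)},
\]
substitute the evolution equation \eqref{f:evolutionF}, and observe that since $\mu_{C(t)}$ is the pushforward of $|\nabla u|^2\,\mathcal{H}^{n-1}\res\partial C(t)$ under the Gauss map, the integrand $(\curv\circ\nu_t^{-1})/|\nabla u\circ\nu_t^{-1}|^2$ pulls back to $\curv$ on $\partial C(t)$, whose integral equals $n\omega_n$ by \eqref{f:intk}. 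This yields a linear ODE for $F(C(t))$ whose unique solution is the claimed exponential, and the strict inclusion of $C(t)$ in $t$ follows at once from $\partial h/\partial t<0$ pointwise on a smooth strictly convex body. For $T=+\infty$, the monotonicity $h(t,\xi)\leq h(0,\xi)$ gives the uniform upper bound, while the Brunn--Minkowski inequality for torsion (Borell~\cite{bor1}), respectively Brascamp--Lieb for $\lambda_1$, combined with the exponential control on $F(C(t))$ prevents the inradius from collapsing on any bounded time interval. Standard parabolic regularity for the quasi-linear equation on $\mathbb{S}^{n-1}$ governing $h$ then extends the smooth solution past any would-be maximal time and preserves strict convexity.

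For part (iii), the direct computation gives
\[
\dot{\mathcal{E}}(t) = -\costa\, F(C(t))\int_{\mathbb{S}^{n-1}}\frac{\curv\circ\nu_t^{-1}}{h\,|\nabla u\circ\nu_t^{-1}|^2}\, d\xi + \g\, n\omega_n,
\]
and the decisive step is Cauchy--Schwarz on $\mathbb{S}^{n-1}$. Writing $d\mu_{C(t)} = (|\nabla u|^2/\curv)\,d\xi$ via Proposition~\ref{p:Hug}, items (a) and (d), and splitting $1=\sqrt{h|\nabla u|^2/\curv}\cdot\sqrt{\curv/(h|\nabla u|^2)}$, one obtains
\[
(n\omega_n)^2 \leq \Big(\int_{\mathbb{S}^{n-1}} h\, d\mu_{C(t)}\Big)\Big(\int_{\mathbb{S}^{n-1}}\tfrac{\curv}{h|\nabla u|^2}\, d\xi\Big) = |\alpha|\,F(C(t))\int_{\mathbb{S}^{n-1}}\tfrac{\curv}{h|\nabla u|^2}\, d\xi,
\]
where the last equality is the representation formula from Section~\ref{sec:prel}. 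The choice $\g = \costa\, n\omega_n/|\alpha|$ then makes the resulting bound sharp, yielding $\dot{\mathcal{E}}\leq 0$. Equality is forced precisely when $h|\nabla u|^2/\curv$ is constant on $\mathbb{S}^{n-1}$, equivalently when $\tau_{C(t)}$ (respectively $\sigma_{C(t)}$) is a constant multiple of spherical Lebesgue measure.

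For part (iv), central symmetry of $h(t,\cdot)$ is preserved because the right-hand side of \eqref{f:evolutionF} is invariant under $\xi\mapsto -\xi$ whenever $C(t)$ is centrally symmetric. Setting $\widetilde{C}(t):=e^{\g t}C(t)$, the $\alpha$-homogeneity of $F$ and part~(i) make $F(\widetilde{C}(t))$ constant in $t$; uniform upper and lower bounds on the rescaled support function $e^{\g t}h(t,\cdot)$ then follow from a combination of the isoperimetric-type inequality for $F$ (giving a lower bound on the inradius of $\widetilde{C}(t)$) and the Blaschke--Santal\'o inequality~\eqref{f:BS} for centrally symmetric bodies (giving an upper bound on the diameter via a lower bound on the polar volume). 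Blaschke's selection theorem yields a Hausdorff-convergent subsequence $\widetilde{C}(t_n)\to C_\infty$, with $C_\infty$ centrally symmetric. Since $\mathcal{E}$ is nonincreasing by (iii) and bounded below on this precompact rescaled family, $\dot{\mathcal{E}}(t_n)\to 0$ along a subsequence, forcing the Cauchy--Schwarz inequality to become an equality at $C_\infty$; by the weak$^*$-continuity of the cone variational measure (Remark~\ref{r:weak}), this gives $\tau_{C_\infty} = c\,\mathcal{H}^{n-1}\res\mathbb{S}^{n-1}$ (respectively the analogous statement for $\sigma$).

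The main obstacle I anticipate is the uniform lower bound on the rescaled support function $e^{\g t}h(t,\cdot)$, needed both for the global existence in (ii) and for the Blaschke precompactness in (iv). Ruling out ``thin'' degenerations of $\widetilde{C}(t)$ requires a careful interplay between the Brunn--Minkowski concavity of $F^{1/\alpha}$, central symmetry (to exploit Blaschke--Santal\'o), and the exponential bookkeeping of~(i); the isoperimetric estimates used in Section~\ref{sec:overdet} should supply the right ingredients.
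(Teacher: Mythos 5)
Parts (i)--(iii) of your plan are essentially sound. For (i) you differentiate $F(C(t))$ directly via the Hadamard shape derivative for general smooth deformations; the paper deliberately avoids this (its representation formulas are stated only for Minkowski perturbations $K+tL$) and instead squeezes $\frac{d}{dt}F$ between two Brunn--Minkowski-type inequalities applied to $F_1(C(t),C(t+s))$ and to the rescaled bodies, but in the smooth setting your shortcut is legitimate and gives the same exponential law. Your (iii) is the paper's argument verbatim: the harmonic--arithmetic mean (Cauchy--Schwarz) inequality $\bigl(\int f\bigr)\bigl(\int 1/f\bigr)\ge n^2\omega_n^2$ combined with the representation formula $\int_{\mathbb S^{n-1}}h\,d\mu_{C(t)}=|\alpha|F(C(t))$, and the choice of $\gamma$ makes the bound exactly zero.

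The genuine gap is in (iv), in the uniform \emph{outer} bound for $\widetilde C(t)=e^{\gamma t}C(t)$, which is precisely what Blaschke--Santal\'o cannot deliver. Inequality \eqref{f:BS} bounds $|K|\,|K^\circ|$ from \emph{above}; your isoperimetric input (Saint-Venant, or the width bound for $\lambda_1$) gives a lower bound on $|K|$ or on the width, hence only an \emph{upper} bound on $|K^\circ|$, which carries no information on ${\rm diam}(K)$, and no lower bound on $|K^\circ|$ is available without already knowing the circumradius is bounded -- the argument is circular. The failure is not merely formal: for $F=\lambda_1$, centrally symmetric boxes $[-L,L]\times[-\varepsilon_L,\varepsilon_L]^{n-1}$ with $\varepsilon_L$ tuned so that $\lambda_1\equiv F_0$ have width bounded below and constant energy, yet diameter $\to\infty$; so ``constant $F(\widetilde C(t))$ $+$ inradius/width bound'' can never yield Blaschke precompactness. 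The missing ingredient is the entropy you established in (iii): integrating $\mathcal E'\le 0$ gives $\int_{\mathbb S^{n-1}}\log\bigl(e^{\gamma t}h(t,\xi)\bigr)\,d\mathcal H^{n-1}\le\mathcal E(0)$, and for a centrally symmetric body containing $\pm x$ with $|x|=R$ one has $h(\xi)\ge R\,|\langle\xi,x/|x|\rangle|$, so the entropy bound forces the circumradius of $\widetilde C(t)$ to stay bounded (Firey's argument); the inner ball then follows from \eqref{f:stimagiu} (volume plus outer radius for torsion, half the width for $\lambda_1$), which is exactly how the paper obtains \eqref{f:contenimenti}. Two smaller points: symmetry preservation should be argued through the assumed uniqueness (the reflected support function $h(t,-\xi)$ solves \eqref{f:evolutionF} with the same datum), not just invariance of the right-hand side; and ``equality in Cauchy--Schwarz at $C_\infty$'' has no direct meaning, since the limit body does not move -- one must argue sequentially as the paper does, showing $\partial_t\widetilde h(t_n,\cdot)\to0$ a.e., deducing pointwise convergence of the densities $\psi_n$ to the constant $\frac{\costa}{\gamma}F_0$, upgrading to strong $L^1$ via convergence of the total masses, and only then identifying the weak$^*$ limit measure from Remark \ref{r:weak}.
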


\begin{remark}\label{r:capacity} As it can be seen by direct inspection of the proof below, for $F = \Cap$,  thanks to the Brunn-Minkowski inequality proved in \cite{bor2}, statement (i) continues to hold, but in principle statement (ii) may fail, because
the positivity of the capacity does not imply that the corresponding convex body is nondegenerate.  However, if it occurs that the flow is defined for all times, 
then also statements (iii)-(iv) hold true. 
\end{remark}

\proof  We follow the proof line of  \cite[Thm.~1, Thm.~2]{firey}.

 (i) Set 
$$F_1 (K, L) :=\frac{1}{\alpha}  \lim _{t \to 0^+ } \frac{F( K + t L ) - F(K)  }{t}  = \frac{1}{|\alpha|} \int _{\mathbb  S ^ { n-1}} h _L \, d \mu _K\,,$$ 
where $\mu _K$ is the first variation measure given by \eqref{f:mufir}; recall that, by the Brunn-Minkowski inequality  satisfied by $F$ (see \cite{B85} for torsion and \cite{BrLi} for the first eigenvalue), it holds that
\begin{equation}\label{f:M1} F _ 1 (K, L)  \geq F (K) ^ { 1- \frac{1}{\alpha} } F (L) ^ {\frac{1}{\alpha}}  \,.
\end{equation} 

Rewrite the equation as 
$$ \frac{|\nabla u( t, \nu_t ^ { -1}  (\xi)) | ^ 2}  { \curv (t, \nu_t ^ { -1} ( \xi ) )}  \lim _{ s \to 0} \frac{1}{s}  \big [ h ( t+ s, \xi) - h ( t , \xi) \big ]  = - \costa F (t) \,.$$ 
Integrating  over $\mathbb S ^ {n-1}$ with respect to $\xi$,  we obtain
$$|\alpha |\lim _{ s \to 0 } \frac{  F  _ 1 (  t,  t+s) - F (t )  }{s} = - \costa n \omega_n F     (t)\,, $$
where we used for shortness the notation $ F _1 ( t, t') :=  F  _ 1 ( C(t), C(t'))$,  $ F  (t):=  F  (C(t))$. 
  
Then, using \eqref{f:M1}, we arrive at
\begin{equation}\label{f:ttorder}  - \frac{  \costa n \omega_n }{|\alpha|}  F  (t)  \geq  F  ( t) ^ { 1 - \frac{1}{\alpha} } \frac{d}{dt} \Big (  F  ^ {\frac{1}{\alpha } }( t) \Big ) =  \frac{1}{\alpha} \frac{d}{dt}  F  ( t)  \,.\end{equation} 

Consider now the convex body $\tc ( t)$  with support function $\th ( t, \xi):= h ( t, \xi) \exp ({\g} t)$. 
Denoting by $\tu$, $\tn_t$, $\tk$, $\ttor$ respectively its torsion function, Gauss map, Gaussian curvature, and torsional rigidity, it holds that
$$
\begin{array}{ll} & \displaystyle \tk ( t , \tn _t ^ { -1}  (\xi) ) = \exp  \big( {- (n-1) \gamma t }\big )  \curv ( t , \nu _t ^ { -1}  (\xi) ) \,, \\
\noalign{\medskip} 
& \displaystyle |\nabla \tu ( t , \tn_t  ^ { -1}  (\xi) ) | ^ 2 = \exp  \big( {  (\alpha-n)\gamma   t } \big )  |\nabla u ( t , \nu_t  ^ { -1}  (\xi) )  | ^ 2\,, \\
\noalign{\medskip} 
& \displaystyle \ttor (t) = \exp  ( \alpha \gamma   t )   F  (t)\, .
\end{array} 
$$
Taking
$\gamma = \frac{\costa n \omega_n}{|\alpha|}$,
by \eqref{f:ttorder}, we have 
\begin{equation}\label{f:segno1} ({\rm sign}\,  \alpha) \frac{d}{dt} \ttor (t) \leq 0 \,.
\end{equation} 
The equation satisfied by $\th$ is 
\begin{equation}\label{f:eqtilde} \frac{|\nabla \tu( t, \tn_t  ^ { -1}  (\xi)) | ^ 2}  { \tk (t, \tn _t ^ { -1} ( \xi ) )}   \Big [ \frac{\partial \th } {\partial t}( t, \xi)  -  \g \th (t, \xi) \Big ]  = - \costa\ttor  (t)  \,.
\end{equation} 
Integrating this equation  on $\mathbb S^{n-1}$, by the choice of $\gamma$ and the representation formula for $\ttor (t)$,  we infer that 
$$\int_{ \mathbb S ^ {n-1} }   \frac{|\nabla \tu( t, \tn_t  ^ { -1}  (\xi)) | ^ 2}  { \tk (t, \tn_t  ^ { -1} ( \xi ) )}    \frac{\partial \th } {\partial t}( t, \xi)   \, d \mathcal H ^ {n-1} (\xi) = 0\,.$$
Consequently, differentiating under the sign of integral (thanks to the smoothness assumption made on $C ( t)$ and standard elliptic boundary regularity), 
$$
\frac{d}{dt} \ttor ( t) = \frac{1}{|\alpha|} \int_{\mathbb S ^ {n-1} }  \frac{\partial  } {\partial t}   \Big [ \frac{|\nabla \tu( t, \tn _t ^ { -1}  (\xi)) | ^ 2}  { \tk (t, \tn_t  ^ { -1} ( \xi ) )}  \Big ]   \th ( t, \xi)  \, d \mathcal H ^ {n-1} (\xi) \,,$$
that we may rewrite as 
$$
\frac{d}{dt} \ttor ( t) =   \lim _{s \to 0 } \frac{ \ttor_1 ( t+s,t ) - \ttor (t)  }{s}  \,, $$
 where we have set $\ttor_1 (t+s, t ) :=  F  _1 ( \tc ( t+s),  \tc ( t))$.  Then, using \eqref{f:M1} in a similar way as above, we obtain 
$$ \frac{d}{dt} \ttor ( t) \geq  \Big [ \ttor ( t) ^ { \frac{1}{\alpha} } \frac{d}{dt} \Big ( \ttor ^ {1-\frac{1}{\alpha} } ( t) \Big ) \Big ]  = \Big ( 1- \frac{1}{\alpha} \Big ) \frac{d}{dt} \ttor  ( t)  \,, $$
which shows that  
\begin{equation}\label{f:segno2} 
({\rm sign}\,  \alpha) \frac{d}{dt} \ttor (t) \geq 0 \,.
\end{equation}

By combining \eqref{f:segno1} and \eqref{f:segno2}, we see that $\ttor (t)$ is equal to a constant, precisely to  $ F _0 := \ttor ( 0) =  F  (0) $, and hence 
$$ F  ( t) =  F _0 \exp ( - \alpha \g t)\,.$$ 

Since $F ( t) >0$ for every $t \in [0, T)$, from the equation we see in particular that the family of convex sets $C (t) $ is strictly decreasing with respect to $t$.

\smallskip 

(ii) We recall that, for a convex body $K$, we have the inequalities
\begin{equation}\label{f:stimagiu}
\tor(K) \leq \frac{|K|^\frac{n+2}{n}}{n(n+2) \omega_n^{\frac{2}{n}}} \qquad \text{ and } \qquad \lambda _ 1 ( K) \geq \frac{ \pi ^ 2 }{  
(\text{minimal width} (K)  ) ^ 2}
\end{equation}
(the first one is the Saint-Venant inequality \eqref{f:isotor},  while for the second one we refer e.g. to \cite[Proposition 11]{BF19}). 
It follows that 
$C(t)$ is a nondegenerate convex body for every $t$, and hence that  $T = + \infty$.  
The strict convexity of $C ( t) $ follows from the strict positivity of its Gaussian curvature resulting from the equation.

\smallskip
(iii)  Dividing equation \eqref{f:eqtilde} by $ \frac{|\nabla \tu( t, \tn_t ^ { -1}  (\xi)) | ^ 2}  { \tk (t, \tn_t ^ { -1} ( \xi ) )}  \th ( t ,\xi)$ and recalling that $\ttor (t)\equiv F _0$, we get 
$$\frac{d}{dt} \log \th (t, \xi) - {\g} = - \costa F _0  \frac  { \tk (t, \tn_t  ^ { -1} ( \xi ) )} {|\nabla \tu( t, \tn_t ^ { -1}  (\xi)) | ^ 2} \frac{1}{  \th ( t , \xi)} \,.$$ 
By applying the H\"older inequality $\big( \int _{\mathbb S ^ {n-1} } f \big ) \big ( \int _{\mathbb S ^ {n-1} } \frac{1}{f}  \big )   \geq n ^ 2 \omega _n ^ 2 $, with 
$f =  \frac  { \tk (t, \tn_t  ^ { -1} ( \xi ) )} {|\nabla \tu( t, \tn_t  ^ { -1}  (\xi)) | ^ 2} \frac{1}{  \th ( t , \xi)} $, we get 
$$\begin{array}{ll}  \displaystyle \int_{\mathbb S ^ {n-1}} \frac{d}{dt} \log \th (t, \xi) \, d \mathcal H ^ {n-1} (\xi)  
& \displaystyle \leq    { \gamma n \omega _n  }  - \costa F _0  n ^ 2 \omega _n ^ 2 \Big  [  \int _{ \mathbb S ^ {n-1}}  \frac {|\nabla \tu( t, \tn _t ^ { -1}  (\xi)) | ^ 2}  { \tk (t, \tn_t  ^ { -1} ( \xi ) )}   \th ( t , \xi) \Big ]^ { -1} 
\\ \noalign{\medskip} 
& \displaystyle =  \frac {\costa n^2  \omega _n ^2 }{|\alpha|}   - \costa F _0  n ^ 2 \omega _n ^ 2 \big  ( |\alpha|  F _0  \big )^ { -1}  = 0\,. 
\end{array} 
 $$ 
By interchanging derivative and integral on $\mathbb S ^ {n-1}$, we obtain that the entropy $\mathcal E ( t) $ decreases along the flow. 

\smallskip
(iv) The central symmetry of $C (t)$ corresponds to the condition $h ( t, \xi) = h ( t, - \xi)$, which is satisfied since the map $h ( t, \xi)$ is a solution to problem \eqref{f:evolutionF}, and by  assumption such problems admits a unique solution.  Now,  integrating on $[0, t]$ the inequality $\mathcal E' \leq 0$, we obtain 
$$ \mathcal E ( t)= \int_{\mathbb S ^ {n-1}} \log \th (t, \xi) \, d \mathcal H ^ {n-1} (\xi)    \leq \int_{\mathbb S ^ {n-1}} \log \th_0 ( \xi) \, d \mathcal H ^ {n-1} (\xi)  = \mathcal E ( 0)   \,.$$ 
By exploiting the central symmetry of $\tc(t)$ this implies,  by arguing as in \cite[proof of Thm.~2]{firey}, that the convex bodies $\tc(t)$ lie into some fixed ball independent of $t$;
moreover, applying the inequalities \eqref{f:stimagiu} it follows that, for every $t \geq 0$, $\tc (t)$ contains a fixed ball centered at the origin.  So there exist $R>r>0$ independent of $t$ such that 
\begin{equation}\label{f:contenimenti} 
B _ r (0) \subseteq \tc (t) \subseteq B _ R ( t) \qquad \forall t \in [0, +\infty). 
\end{equation} 
In particular this implies that the monotone decreasing map $\mathcal E (t)$ is bounded from below, and so it converges to a finite limit as $t \to + \infty$; thus we have 
\begin{equation}\label{f:derentropy} \lim _{t \to + \infty} \frac{d}{dt} \mathcal E ( t) =   \lim _{t \to + \infty} \frac{d}{dt} \int_{\mathbb S ^ {n-1}}  \log \th (t, \xi) \, d \mathcal H ^ {n-1} (\xi)   = 0\,.
\end{equation}
Set
$$g ( t, \xi):= 1 - \frac{1}{\g}\frac{  \partial }{\partial t} \log \th (t, \xi) \,. $$ 
We have 
$$\begin{array}{ll} & \displaystyle \int_{\mathbb S ^ {n-1} }g ( t, \xi) \, d \mathcal H ^ {n-1} (\xi) = n \omega_n -  \frac{1}{\g} \int_{\mathbb S ^ {n-1} } \frac{  \partial }{\partial t} \log \th (t, \xi) \, d \mathcal H ^ {n-1} (\xi) \\ \noalign{\medskip} 
& \displaystyle \int_{\mathbb S ^ {n-1} } \frac{1}{g  ( t, \xi)} \, d \mathcal H ^ {n-1} (\xi)   =   \frac{\g }{  \costa F_0} \int_{\mathbb S ^ {n-1} } \th ( t, \xi)  \frac{|\nabla \tu( t, \tn_t ^ { -1}  (\xi)) | ^ 2}  { \tk (t, \tn_t ^ { -1} ( \xi ) )}   \, d \mathcal H ^ {n-1}  = { n \omega _n }  \,,  \end{array} $$ 
 where to compute the integral of $g ^{-1}$ we have used  \eqref{f:eqtilde}.   It follows that
 $$\int_{\mathbb S ^ {n-1} } \Big ( \sqrt g - \frac{1}{\sqrt g }  \Big ) ^ 2 \, d \mathcal H ^ {n-1} (\xi) = -  \frac{1}{\g} \int_{\mathbb S ^ {n-1} } \frac{  \partial }{\partial t} \log \th (t, \xi) \, d \mathcal H ^ {n-1} (\xi)  $$ 
 and hence by \eqref{f:derentropy} we have that $g ( t, \xi) \to 1$ $\mathcal H ^ {n-1}$-a.e.\ on $\mathbb S ^ {n-1}$ as $t \to + \infty$. 
 By \eqref{f:contenimenti}, the support functions $\th ( t, \cdot)$ are bounded from above and from below on $\mathbb S ^ {n-1}$ by positive constants independent of $t$. It follows that
  \begin{equation}\label{f:vanishing} \lim _{t \to + \infty} \frac{ \partial \th }{\partial t}  ( t, \xi ) = 0 \qquad \mathcal H ^ {n-1}\text{-a.e.\ on } \mathbb S ^ {n-1}  \,. 
 \end{equation} 
Take now a sequence $\{t _n \} \to + \infty$. By \eqref{f:contenimenti}  and 
by the compactness of the Hausdorff distance,  $\tc ( t _n )$  converges to a nondegenerate convex body $\tc _\infty$. 
Then the cone variational measures of $\tc ( t _n )$ converge weakly* to the cone variational measure of $\tc _\infty$ (see Remark~\ref{r:weak}). 
Thus, if we set
$$\psi _n ( \xi):=  \frac{|\nabla \tu( t _n, \tn_{t _n}  ^ { -1}  (\xi)) | ^ 2}  { \tk (t_n, \tn _{t_n} ^ { -1} ( \xi ) )}     \th (t _n, \xi) \,$$
we have that $\psi _n$ converge weakly in 
$L ^ 1 (\mathbb S ^ {n-1},  \mathcal H ^ {n-1}\res \mathbb S ^ {n-1})$ to $\psi _\infty$, being $\psi _\infty$ the density of the cone variational measure of $\tc _\infty$. 

On the other hand, 
by passing to the limit as $n \to + \infty$ in the equation \eqref{f:eqtilde} written at $t = t _n$, recalling that $F ( t _n) = F_0$, and exploiting \eqref{f:vanishing}, we obtain the following pointwise convergence
\begin{equation}\label{f:limpuntuale}   \lim _n \psi _n (\xi)   =  \frac{ \costa }{\g} F_0  \qquad \mathcal H ^ {n-1}\text{-a.e. on } \mathbb S ^ {n-1} \,.
\end{equation} 
Moreover, we have  
$$ \lim _n  \int _{\mathbb S ^ {n-1}} \psi _n (\xi)  \, d \mathcal H ^ {n-1} = \int _{\mathbb S ^ {n-1}} \psi _\infty (\xi)  \, d \mathcal H ^ {n-1} \,,
$$
 (see respectively \cite[eq.\ (23)]{CoFi} for torsion, \cite[eq.\ (3.15)]{Je96} for capacity and \cite[Section 7]{Je}  for principal eigenvalue), 
 so that $\psi _n$ converge to $\psi _\infty$ strongly in $L ^ 1 (\mathbb S ^ {n-1},  \mathcal H ^ {n-1}\res \mathbb S ^ {n-1})$.   

In particular, from \eqref{f:limpuntuale} we conclude that $\psi _\infty = \frac{ \costa }{\g} F_0$, namely 
the cone variational measure of $\tc _\infty$ 
 is a constant multiple of $\mathcal H ^ {n-1} \res \mathbb S ^ {n-1}$. \qed

\section{Concluding remarks}\label{sec:final}

\begin{remark}
In dimension $n = 2$ and for $K$ smooth, the existence of a solution to the logarithmic Minkowski problem~\eqref{f:minpbt0} easily follows from the results proved in \cite[Section~6]{BLYZ}. 
Indeed, it is not restrictive to  assume $\tor ( K ) = 1$ and to minimize over convex bodies with  unit torsion. Let $\{L _j\}$ be a minimizing sequence. By arguing as in the proof of Lemma 6.2 in \cite{BLYZ}, it is possible to find a sequence of origin-symmetric parallelograms $\{P_j\}$, with $\tor ( P _j) = 1$ and orthogonal diagonals, such that $P _j \subseteq L _j \subseteq 2 P _j$. 
If  $\{L _j\}$ is not bounded, we have that also  $\{P_j\}$ is not bounded; moreover, by the assumption $\tor (L _j )= 1$, the monotonicity of torsion by inclusions,  and 
 Saint-Venant inequality \eqref{f:isotor},   the volume of $P _j$ is bounded from below by a positive constant independent of $j$. Then we can apply Lemma 6.1 in \cite{BLYZ} to infer that  the sequence
 $\int_{\mathbb S ^ 1} \log h _ {P _j} \, d V _K$ is not bounded from above. 
Since by Hopf's boundary lemma $|\nabla u _K|  ^ 2$  has a strictly positive minimum on $\partial K$, this implies that  also the sequence $\int_{\mathbb S ^ 1} \log h _ {L _j} \, d \tau_K$ is not bounded from above, contradicting the fact that $\{L _j\}$ is a minimizing sequence. 
Hence, $\{L _j \}$ remains bounded, and by Blaschke's selection theorem we may select a converging subsequence; its limit turns out to be an origin symmetric convex body with unit torsion which solves problem \eqref{f:minpbt0}. 
\end{remark}

 \begin{remark}
In any space dimension, for $K = B$ the logarithmic Minkowski problem 
 \eqref{f:minpbt0}  is solved uniquely by the ball.  Namely, 
 using for brevity the notation $\overline \tau _B$ for the normalized
 cone torsion measure, i.e. $\overline \tau _B:= \tau _ B / \tor (B)$, 
 we have 
 \begin{equation*}
\int_{\mathbb S ^ {n-1}} \log  \Big (\frac{h _L }{h _B} \Big)    d \overline \tau _B \geq  \frac{1}{n+2}  \log  \Big ( \frac{\tor (L) } {\tor (B) } \Big )  \qquad \forall L \in \mathcal K ^ n _*\,.
\end{equation*}
 Indeed, it was proved by Guan-Ni \cite[Proposition 1.1]{GuanNi} that 
 \begin{equation}\label{f:logball0} \int_{\mathbb S^ {n-1}} \log  \Big (\frac{h _L }{h _B} \Big)    d \overline V _B \geq  \frac{1}{n}  \log  \Big ( \frac{ |L| } {|B| } \Big )  \qquad \forall L \in \mathcal K ^ n _*\,,\end{equation}
 with equality if and only if $L = B$, where $\overline V _B:= V _B / |B|$ denotes the normalized cone volume measure of $B$.
 Since $B$ is a ball, we have $ \overline V _B = \overline \tau _B $. Then, by using \eqref{f:logball0} and the Saint-Venant inequality, we obtain
 $$  \int_{\mathbb S ^ {n-1}} \log  \Big (\frac{h _L } {h _B}\Big)    d \overline \tau _B \geq  \frac{1}{n}  \log  \Big ( \frac{ |L| } {|B| } \Big )  \geq   \frac{1}{n+2}  \log  \Big ( \frac {\tor (L) } {\tor (B) }\Big )    \qquad \forall L \in \mathcal K ^ n _*\,.$$
  \end{remark} 

\begin{remark} 
In dimension $n = 2$, the logarithmic Brunn-Minkowski inequality for the first eigenvalue and 
for the torsion
can be easily tested on the class of rectangles. For any $x>0$, set $R_x = ( 0, x) \times (0, 1)$.  Given $\ell _1, \ell _2>0$, and $\lambda \in (0, 1)$, 
we have
$$(1 - \l ) \cdot R_{\ell _1} + _0 \l  \cdot R_{\ell _2}  = R _{\ell} \qquad \text{ with }\  \ell = \ell _ 1 ^ { 1 - \lambda } \ell _ 2 ^ \lambda \,.$$ 
Then, by using the explicit formulas 
$$\begin{array}{ll} 
&\displaystyle \lambda _ 1 ( R _\ell) = \pi ^ 2 \big ( 1 + \frac{1}{\ell ^ 2 }  \big ) 
\\ \noalign{\medskip}
& \displaystyle \tor ( R_\ell) =  \frac{ \ell ^ 3}{12} - \frac{ 16 \ell ^ 4}{\pi ^5} \sum _{ k \geq 0} \frac{e ^ { ( 2k+1) \pi/\ell} -1}{e ^ { ( 2k+1) \pi/\ell } +1  }\frac{1}{(2k+1) ^ 5} \, , 
\end{array}$$ 
the inequalities 
$$ \lambda _ 1 (R_\ell) \leq \lambda _1(R_{\ell_1}) ^ { 1- \l} \lambda _1 (R_{\ell_2}) ^ \l  
 \qquad \text{ and } \qquad 
\tor (R_\ell) \geq \tor(R_{\ell_1}) ^ { 1- \l} \tor(R_{\ell_2}) ^ \l  $$ 
are confirmed respectively via  explicit straightforward computations and via computations done by Mathematica.
\end{remark} 

\medskip


\def\cprime{$'$}
\begin{bibdiv}
\begin{biblist}

\bib{AS64}{book}{
      author={Abramowitz, {M.}},
      author={Stegun, {I.A.}},
       title={Handbook of mathematical functions with formulas, graphs, and
  mathematical tables},
      series={National Bureau of Standards Applied Mathematics Series, No. 55},
   publisher={U. S. Government Printing Office, Washington, D.C.},
        date={1964},
        note={For sale by the Superintendent of Documents},
}

\bib{Alex}{article}{
      author={Alexandroff, {A.D.}},
       title={Almost everywhere existence of the second differential of a
  convex function and some properties of convex surfaces connected with it},
        date={1939},
     journal={Leningrad State Univ. Annals [Uchenye Zapiski] Math. Ser.},
      volume={6},
       pages={3\ndash 35},
      review={\MR{0003051}},
}

\bib{A99}{article}{
      author={Andrews, {B.}},
       title={Gauss curvature flow: the fate of the rolling stones},
        date={1999},
     journal={Invent. Math.},
      volume={138},
      number={1},
       pages={151\ndash 161},
}

\bib{ACGL}{book}{
      author={Andrews, {B.}},
      author={Chow, {B.}},
      author={Guenther, {C.}},
      author={Langford, {M.}},
       title={Extrinsic geometric flows},
      series={Graduate Studies in Mathematics},
   publisher={American Mathematical Society, Providence, RI},
        date={[2020] \copyright 2020},
      volume={206},
}

\bib{BGMN}{article}{
      author={Barthe, {F.}},
      author={Gu\'{e}don, {O.}},
      author={Mendelson, {S.}},
      author={Naor, {A.}},
       title={A probabilistic approach to the geometry of the {$l^n_p$}-ball},
        date={2005},
     journal={Ann. Probab.},
      volume={33},
      number={2},
       pages={480\ndash 513},
}

\bib{B23}{book}{
      author={Blaschke, {W.}},
       title={Vorlesungen \"uber {I}ntegralgeometrie},
   publisher={Deutscher Verlag der Wissenschaften, Berlin},
        date={1955},
        note={3te Aufl},
}

\bib{bor2}{article}{
      author={Borell, {C.}},
       title={Capacitary inequalities of the {B}runn-{M}inkowski type},
        date={1983},
     journal={Math. Ann.},
      volume={263},
      number={2},
       pages={179\ndash 184},
}

\bib{bor1}{article}{
      author={Borell, {C.}},
       title={Greenian potentials and concavity},
        date={1985},
     journal={Math. Ann.},
      volume={272},
      number={1},
       pages={155\ndash 160},
}

\bib{B85}{article}{
      author={Borell, {C.}},
       title={Greenian potentials and concavity},
        date={1985},
     journal={Math. Ann.},
      volume={272},
      number={1},
}

\bib{Bor0}{misc}{
      author={B\"{o}r\"{o}czky, {K.}},
       title={The logarithmic minkowski conjecture and the lp-minkowski
  problem},
        date={2022},
        note={preprint arXiv:2210.00194},
}

\bib{BH15}{article}{
      author={B\"{o}r\"{o}czky, {K.J.}},
      author={Heged\H{u}s, {P.}},
       title={The cone volume measure of antipodal points},
        date={2015},
     journal={Acta Math. Hungar.},
      volume={146},
      number={2},
       pages={449\ndash 465},
}

\bib{BoroHenk}{article}{
      author={B\"{o}r\"{o}czky, {K.J.}},
      author={Henk, {M.}},
       title={Cone-volume measure of general centered convex bodies},
        date={2016},
     journal={Adv. Math.},
      volume={286},
       pages={703\ndash 721},
}

\bib{BLYZ}{article}{
      author={B\"{o}r\"{o}czky, {K.J.}},
      author={Lutwak, {E.}},
      author={Yang, {D.}},
      author={Zhang, {G.}},
       title={The log-{B}runn-{M}inkowski inequality},
        date={2012},
     journal={Adv. Math.},
      volume={231},
      number={3-4},
       pages={1974\ndash 1997},
}

\bib{BLYZ2}{article}{
      author={B\"{o}r\"{o}czky, {K.J.}},
      author={Lutwak, {E.}},
      author={Yang, {D.}},
      author={Zhang, {G.}},
       title={The logarithmic {M}inkowski problem},
        date={2013},
     journal={J. Amer. Math. Soc.},
      volume={26},
      number={3},
       pages={831\ndash 852},
}

\bib{BGNT}{article}{
      author={Brandolini, {B.}},
      author={Gavitone, {N.}},
      author={Nitsch, {C.}},
      author={Trombetti, {C.}},
       title={Characterization of ellipsoids through an overdetermined boundary
  value problem of {M}onge-{A}mp\`ere type},
        date={2014},
     journal={J. Math. Pures Appl. (9)},
      volume={101},
      number={6},
       pages={828\ndash 841},
}

\bib{BrLi}{article}{
      author={Brascamp, {H.J.}},
      author={Lieb, {E.H.}},
       title={On extensions of the {B}runn-{M}inkowski and
  {P}r\'{e}kopa-{L}eindler theorems, including inequalities for log concave
  functions, and with an application to the diffusion equation},
        date={1976},
     journal={J. Functional Analysis},
      volume={22},
      number={4},
       pages={366\ndash 389},
}

\bib{BCD}{article}{
      author={Brendle, {S.}},
      author={Choi, {K.}},
      author={Daskalopoulos, {P.}},
       title={Asymptotic behavior of flows by powers of the {G}aussian
  curvature},
        date={2017},
     journal={Acta Math.},
      volume={219},
      number={1},
       pages={1\ndash 16},
}

\bib{BF19}{article}{
      author={Bucur, {D.}},
      author={Fragal\`a, {I.}},
       title={On the honeycomb conjecture for {R}obin {L}aplacian eigenvalues},
        date={2019},
     journal={Commun. Contemp. Math.},
      volume={21},
      number={2},
       pages={1850007, 29},
}

\bib{BFL12}{article}{
      author={Bucur, {D.}},
      author={Fragal\`a, {I.}},
      author={Lamboley, {J.}},
       title={Optimal convex shapes for concave functionals},
        date={2012},
     journal={ESAIM Control Optim. Calc. Var.},
      volume={18},
      number={3},
       pages={693\ndash 711},
}

\bib{Col2005}{article}{
      author={Colesanti, {A.}},
       title={Brunn-{M}inkowski inequalities for variational functionals and
  related problems},
        date={2005},
        ISSN={0001-8708},
     journal={Adv. Math.},
      volume={194},
      number={1},
       pages={105\ndash 140},
         url={https://doi.org/10.1016/j.aim.2004.06.002},
      review={\MR{2141856}},
}

\bib{CoFi}{article}{
      author={Colesanti, {A.}},
      author={Fimiani, {M.}},
       title={The {M}inkowski problem for torsional rigidity},
        date={2010},
     journal={Indiana Univ. Math. J.},
      volume={59},
      number={3},
       pages={1013\ndash 1039},
}

\bib{CF12}{article}{
      author={Crasta, {G.}},
      author={Fragal{\`a}, {I.}},
       title={On a geometric combination of functions related to
  pr\'ekopa--leindler inequality},
        date={2023},
     journal={Mathematika},
      volume={69},
       pages={482\ndash 507},
}

\bib{Dahl}{article}{
      author={Dahlberg, {B.E.J.}},
       title={Estimates of harmonic measure},
        date={1977},
        ISSN={0003-9527},
     journal={Arch. Rational Mech. Anal.},
      volume={65},
      number={3},
       pages={275\ndash 288},
         url={https://doi.org/10.1007/BF00280445},
      review={\MR{466593}},
}

\bib{Fed}{book}{
      author={Federer, H.},
       title={Geometric measure theory},
      series={Die Grundlehren der mathematischen Wissenschaften, Band 153},
   publisher={Springer-Verlag New York Inc., New York},
        date={1969},
      review={\MR{MR0257325 (41 \#1976)}},
}

\bib{firey}{article}{
      author={Firey, {W.J.}},
       title={Shapes of worn stones},
        date={1974},
     journal={Mathematika},
      volume={21},
       pages={1\ndash 11},
}

\bib{f}{article}{
      author={Fragal{\`a}, {I.}},
       title={Symmetry results for overdetermined problems on convex domains
  via {B}runn-{M}inkowski inequalities},
        date={2012},
     journal={J. Math. Pures Appl. (9)},
      volume={97},
      number={1},
       pages={55\ndash 65},
}

\bib{GT}{book}{
      author={Gilbarg, {D.}},
      author={Trudinger, {N.S.}},
       title={Elliptic partial differential equations of second order},
   publisher={Springer-Verlag},
     address={Berlin},
        date={1977},
}

\bib{GM87}{article}{
      author={Gromov, {M.}},
      author={Milman, {V.D.}},
       title={Generalization of the spherical isoperimetric inequality to
  uniformly convex {B}anach spaces},
        date={1987},
     journal={Compositio Math.},
      volume={62},
      number={3},
       pages={263\ndash 282},
}

\bib{GuanNi}{article}{
      author={Guan, {P.}},
      author={Ni, {L.}},
       title={Entropy and a convergence theorem for {G}auss curvature flow in
  high dimension},
        date={2017},
     journal={J. Eur. Math. Soc. (JEMS)},
      volume={19},
      number={12},
       pages={3735\ndash 3761},
}

\bib{H06}{book}{
      author={Henrot, {A.}},
       title={Extremum problems for eigenvalues of elliptic operators},
      series={Frontiers in Mathematics},
   publisher={Birkh\"{a}user Verlag, Basel},
        date={2006},
}

\bib{H17}{book}{
      author={Henrot, {A.}},
       title={Shape optimization and spectral theory},
   publisher={De Gruyter Open Poland},
        date={2017},
}

\bib{Hugc}{article}{
      author={Hug, {D.}},
       title={Contributions to affine surface area},
        date={1996},
        ISSN={0025-2611},
     journal={Manuscripta Math.},
      volume={91},
      number={3},
       pages={283\ndash 301},
         url={https://doi.org/10.1007/BF02567955},
      review={\MR{1416712}},
}

\bib{Hug1}{article}{
      author={Hug, {D.}},
       title={Absolute continuity for curvature measures of convex sets. {I}},
        date={1998},
        ISSN={0025-584X},
     journal={Math. Nachr.},
      volume={195},
       pages={139\ndash 158},
         url={https://doi.org/10.1002/mana.19981950108},
      review={\MR{1654685}},
}

\bib{Hug2}{article}{
      author={Hug, {D.}},
       title={Absolute continuity for curvature measures of convex sets. {II}},
        date={1999},
        ISSN={0025-5874},
     journal={Math. Z.},
      volume={232},
      number={3},
       pages={437\ndash 485},
         url={https://doi.org/10.1007/PL00004765},
      review={\MR{1719698}},
}

\bib{Je}{article}{
      author={Jerison, {D.}},
       title={The direct method in the calculus of variations for convex
  bodies},
        date={1996},
     journal={Adv. Math.},
      volume={122},
      number={2},
       pages={262\ndash 279},
}

\bib{Je96}{article}{
      author={Jerison, {D.}},
       title={A {M}inkowski problem for electrostatic capacity},
        date={1996},
     journal={Acta Math.},
      volume={176},
      number={1},
       pages={1\ndash 47},
}

\bib{KKK}{article}{
      author={Kawohl, {B.}},
      author={Kr\"{o}mer, {S.}},
      author={Kurtz, {J.}},
       title={Radial eigenfunctions for the game-theoretic {$p$}-{L}aplacian on
  a ball},
        date={2014},
     journal={Differential Integral Equations},
      volume={27},
      number={7-8},
       pages={659\ndash 670},
}

\bib{ludwig}{article}{
      author={Ludwig, {M.}},
       title={General affine surface areas},
        date={2010},
     journal={Adv. Math.},
      volume={224},
      number={6},
       pages={2346\ndash 2360},
}

\bib{LR10}{article}{
      author={Ludwig, {M.}},
      author={Reitzner, {M.}},
       title={A classification of {${\rm SL}(n)$} invariant valuations},
        date={2010},
     journal={Ann. of Math. (2)},
      volume={172},
      number={2},
       pages={1219\ndash 1267},
}

\bib{Lut95}{incollection}{
      author={Lutwak, {E.}},
       title={On the {B}laschke-{S}antal\'{o} inequality},
   booktitle={Discrete geometry and convexity ({N}ew {Y}ork, 1982)},
      series={Ann. New York Acad. Sci.},
      volume={440},
       pages={106\ndash 112},
}

\bib{Lut96}{article}{
      author={Lutwak, {E.}},
       title={The {B}runn-{M}inkowski-{F}irey theory. {II}. {A}ffine and
  geominimal surface areas},
        date={1996},
        ISSN={0001-8708},
     journal={Adv. Math.},
      volume={118},
      number={2},
       pages={244\ndash 294},
         url={https://doi.org/10.1006/aima.1996.0022},
      review={\MR{1378681}},
}

\bib{LYZ05}{article}{
      author={Lutwak, {E.}},
      author={Yang, {D.}},
      author={Zhang, {G.}},
       title={{$L_p$} {J}ohn ellipsoids},
        date={2005},
     journal={Proc. London Math. Soc. (3)},
      volume={90},
      number={2},
       pages={497\ndash 520},
}

\bib{McMullen}{article}{
      author={McMullen, P.},
       title={On the inner parallel body of a convex body},
        date={1974},
        ISSN={0021-2172},
     journal={Israel J. Math.},
      volume={19},
       pages={217\ndash 219},
         url={https://doi.org/10.1007/BF02757715},
      review={\MR{367810}},
}

\bib{naor1}{article}{
      author={Naor, {A.}},
       title={The surface measure and cone measure on the sphere of {$l_p^n$}},
        date={2007},
     journal={Trans. Amer. Math. Soc.},
      volume={359},
      number={3},
       pages={1045\ndash 1079},
}

\bib{naor2}{article}{
      author={Naor, {A.}},
      author={Romik, {D.}},
       title={Projecting the surface measure of the sphere of {$\scr l_p^n$}},
        date={2003},
     journal={Ann. Inst. H. Poincar\'{e} Probab. Statist.},
      volume={39},
      number={2},
       pages={241\ndash 261},
}

\bib{PW12}{article}{
      author={Paouris, {G.}},
      author={Werner, {E.M.}},
       title={Relative entropy of cone measures and {$L_p$} centroid bodies},
        date={2012},
     journal={Proc. Lond. Math. Soc. (3)},
      volume={104},
      number={2},
       pages={253\ndash 286},
}

\bib{posz}{book}{
      author={P{\'o}lya, G.},
      author={Szeg{\"o}, G.},
       title={Isoperimetric {I}nequalities in {M}athematical {P}hysics},
      series={Annals of Mathematics Studies, no. 27},
   publisher={Princeton University Press},
     address={Princeton, N. J.},
        date={1951},
}

\bib{S49}{article}{
      author={Santal{\'o}, {L. A.}},
       title={An affine invariant for convex bodies of {$n$}-dimensional
  space},
        date={1949},
     journal={Portugaliae Math.},
      volume={8},
       pages={155\ndash 161},
}

\bib{Sch}{book}{
      author={Schneider, {R.}},
       title={Convex bodies: the {B}runn--{M}inkowski theory},
   publisher={Cambridge Univ.~Press},
     address={Cambridge},
        date={1993},
}

\bib{Sch2}{book}{
      author={Schneider, {R.}},
       title={Convex bodies: the {B}runn-{M}inkowski theory},
     edition={expanded},
      series={Encyclopedia of Mathematics and its Applications},
   publisher={Cambridge University Press, Cambridge},
        date={2014},
      volume={151},
        ISBN={978-1-107-60101-7},
      review={\MR{3155183}},
}

\bib{stancu12}{article}{
      author={Stancu, {A.}},
       title={Centro-affine invariants for smooth convex bodies},
        date={2012},
     journal={Int. Math. Res. Not. IMRN},
      number={10},
       pages={2289\ndash 2320},
}

\bib{Tso}{article}{
      author={Tso, {K.}},
       title={Deforming a hypersurface by its {G}auss-{K}ronecker curvature},
        date={1985},
     journal={Comm. Pure Appl. Math.},
      volume={38},
      number={6},
       pages={867\ndash 882},
}

\bib{Se}{article}{
      author={Serrin, {J.}},
       title={A symmetry problem in potential theory},
        date={1971},
     journal={Arch.\ Rational Mech.\ Anal.},
      volume={43},
       pages={304\ndash 318},
}

\bib{WerYe}{article}{
      author={Werner, {E.}},
      author={Ye, {D.}},
       title={New {$L_p$} affine isoperimetric inequalities},
        date={2008},
        ISSN={0001-8708},
     journal={Adv. Math.},
      volume={218},
      number={3},
       pages={762\ndash 780},
         url={https://doi.org/10.1016/j.aim.2008.02.002},
      review={\MR{2414321}},
}

\bib{zhu2}{article}{
      author={Zhu, {G.}},
       title={The logarithmic {M}inkowski problem for polytopes},
        date={2014},
     journal={Adv. Math.},
      volume={262},
       pages={909\ndash 931},
}

\bib{zhu1}{article}{
      author={Zhu, {G.}},
       title={The centro-affine {M}inkowski problem for polytopes},
        date={2015},
     journal={J. Differential Geom.},
      volume={101},
      number={1},
       pages={159\ndash 174},
}

\end{biblist}
\end{bibdiv}

\end{document}